\newtheorem{theorem}{Theorem}[section]
\newtheorem{lemma}[theorem]{Lemma}
\newtheorem{remark}[theorem]{Remark}
\newtheorem{assumption}[theorem]{Assumption}
\numberwithin{equation}{section}
\newcommand{\normmm}[1]{{\left\|\kern-0.25ex\left\|\kern-0.25ex\left\| #1
		\right\|\kern-0.25ex\right\|\kern-0.25ex\right\|}}
\begin{document}
\title{A $\mathcal{CR}$-rotated $Q_1$ nonconforming finite element method for Stokes interface problems on local anisotropic fitted mixed meshes} 
\author{
	Chenchen Geng$^{1}$ \quad
	Hua Wang$^{1}$\footnote{Correspondence author. E-mail addresses: wanghua@xtu.edu.cn (H. Wang).\\
%		202321511151@smail.xtu.edu.cn (C. Geng),\\
%		202121511212@smail.xtu.edu.cn (F. Zou),\\
%		wanghua@xtu.edu.cn (H. Wang).\\
		\textbf{Funding:} This research is supported by NSFC project 12101526 and Young Elite Scientists Sponsorship Program by CAST 2023QNRC001.}
		 \quad
	Fengren Zou$^{1}$\\
	{\small $^1$ School of Mathematics and Computational Science, Xiangtan University, Xiangtan 411105, China}\\
}
	\date{}
	% The correct dates will be entered by the editor
	\maketitle
	
\begin{abstract}
	We propose a new nonconforming finite element method for solving Stokes interface problems. The method is constructed on local anisotropic mixed meshes, which are generated by fitting the interface through simple connection of intersection points on an interface-unfitted background mesh, as introduced in \cite{Hu2021optimal}. For triangular elements, we employ the standard $\mathcal{CR}$ element; for quadrilateral elements, a new rotated $Q_1$-type element is used. We prove that this rotated $Q_1$ element remains unisolvent and stable even on degenerate quadrilateral elements. Based on these properties, we further show that the space pair of $\mathcal{CR}$-rotated $Q_1$ elements (for velocity) and piecewise $P_0$ spaces (for pressure) satisfies the inf-sup condition without requiring any stabilization terms. As established in our previous work \cite{Wang2025nonconforming}, the consistency error achieves the optimal convergence order without the need for penalty terms to control it. Finally, several numerical examples are provided to verify our theoretical results.
\end{abstract}

\textbf{Keywords}: Stokes interface problems; rotated $Q_1$ element; inf-sup condition; anisotropic quadrilateral element;
	
	\section{Introduction}\label{sec:introduction}
%介绍问题以及应用背景
We consider the following Stokes interface problem in a convex polygonal $\Omega$ in $\mathbb{R}^2$ (see Figure \ref{fig:mesh} (a) for an illustration)
\begin{equation}
	\begin{aligned}
			-\nabla\!\cdot\!(\mu \nabla \boldsymbol{u}-p\boldsymbol{I})& = \boldsymbol{f}\quad \mathrm{in} \; \Omega_1 \cup \Omega_2,\\
			\nabla\!\cdot\!\boldsymbol{u}&=0 \quad \mathrm{in}
			\; \Omega_1 \cup \Omega_2,\\
			[\![\boldsymbol{u}]\!] &=\boldsymbol{0}\quad 
			\mathrm{on}\; \Gamma,\\
			[\![(\mu\nabla\boldsymbol{u}-p\boldsymbol{I})\!\cdot\!\boldsymbol{n}_{\Gamma}]\!]&=\boldsymbol{0}\quad \mathrm{on}\;\Gamma,\\
			\boldsymbol{u} & =\boldsymbol{0}\quad \mathrm{on}\; \partial \Omega,
	\end{aligned}\label{eq:stokespro}
\end{equation}
where $\boldsymbol{f} \in (L^2(\Omega))^2$, $\boldsymbol{n}_{\Gamma}$ is the unit normal vector of the interface $\Gamma$ orienting from $\Omega_1$ towards $\Omega_2$, $[\![\cdot]\!]$ denotes the jump across $\Gamma$, i.e., $[\![\boldsymbol{v}]\!]=(\boldsymbol{v}_1-\boldsymbol{v}_2)|_{\Gamma}$ with $\boldsymbol{v}_i = \boldsymbol{v}|_{\Omega_i},\;i=1,2,$ and $\mu$ is a piecewise positive constant vicosity function in $\Omega$, i.e., 
\begin{equation*}
		\begin{aligned}
			\mu = \begin{cases}
				\mu_1 \quad \mathrm{in}\; \Omega_1,\\	
				\mu_2 \quad  \mathrm{in} \;\Omega_2.
			\end{cases}
		\end{aligned}
\end{equation*}

Stokes interface problems arise primarily from two-phase incompressible flows, which are prevalent in engineering and scientific computing. These problems are typically modeled using Navier-Stokes equations with discontinuous viscosity coefficients. When the viscosity of the two-phase flow is high, the Stokes interface problem—characterized by discontinuous viscous coefficients—serves as a reasonable simplification of such models.

% Unfitted Methods
Unfitted mesh methods exhibit particular effectiveness in addressing interface problems with complex geometries, owing to their flexibility in handling irregular interface structures. Significant advancements have been made in these methods, focusing on enhancing accuracy, computational efficiency, and adaptability to intricate interfaces. Notable examples include the Immersed Finite Element Method (IFEM) and the Extended Finite Element Method (XFEM), which have been extensively studied and validated in numerous works (see, e.g., \cite{Li1998, Li2003, An2014, Chou2012immersed, Lin2015, Ji2023, Wang2019a, Zhang2019strongly}). Additionally, other promising approaches such as the Generalized Finite Element Method (GFEM) have been proposed, with an overview provided in \cite{Fries2010extended}. IFEM typically modifies finite element basis functions to explicitly satisfy interface conditions, whereas XFEM introduces penalization terms into the variational formulation to weakly enforce these conditions—an approach known as interior penalty or Nitsche's methods (see \cite{Hansbo2002unfitted, Li2003, burman2015cutfem, Hansbo2014cut, Cao2022extended, Fries2010extended}). For instance, Chen et al. \cite{Chen2023an} combined XFEM with a novel mesh generation strategy, effectively merging small interface elements with neighboring elements.

% Refined Mesh Strategies
Another widely explored strategy involves refining the unfitted mesh near interfaces to construct locally fitted or anisotropic meshes. Previous studies have demonstrated significant progress using this approach (see \cite{Chen2009the, Xu2016optimal, chen2017interface, Hu2021optimal}). Chen et al. \cite{Chen2009the} generated intermediate fitted meshes by subdividing interface tetrahedra into smaller ones via the latest vertex bisection algorithm, preserving mesh quality throughout adaptive refinement. Xu et al. \cite{Xu2016optimal} proposed linear finite element schemes for diffusion and Stokes equations on interface-fitted grids satisfying the maximal angle condition. Similarly, Chen et al. \cite{chen2017interface} developed methods for semi-structured, interface-fitted mesh generation in two and three dimensions, leveraging virtual element methods to solve elliptic interface problems.

However, refined elements adjacent to interfaces often violate the minimal angle condition (shape regularity), complicating error analysis and numerical stability. Despite these challenges, this refinement approach remains prevalent due to its adaptability in handling complex interface geometries.
% Challenges for Nonconforming Elements
Most unfitted methods face significant challenges when incorporating nonconforming elements. Firstly, the consistency error cannot be adequately controlled. In Nitsche-type XFEM approaches, the weak continuity across cut edges is compromised, necessitating penalty terms to stabilize consistency errors (see \cite{Wang2019a}). For the Immersed Finite Element Method (IFEM), although weak continuity is preserved, inherent solution singularities at interfaces lead to a half-order degradation in consistency errors compared to interpolation errors (see \cite{Ji2023analysis}).

Secondly, the inf-sup condition is generally not satisfied naturally, requiring additional stabilization terms, as demonstrated in \cite{Wang2019a} and \cite{Huang2024nonconforming}. While the work in \cite{Jones2021class} proposed the $CR$-$P_0$ and rotated $Q_1$-$Q_0$ elements for solving Stokes interface problems, it did not provide a proof of the inf-sup condition.

% Our Motivation and Related Work
In prior work \cite{Wang2025nonconforming}, the authors proposed a $P_1$-nonconforming element for second-order elliptic interface problems. However, the finite element pair formed with $P_0$ elements fails the inf-sup condition due to insufficient velocity space degrees of freedom. We extend this work naturally by enriching the velocity space with edge-based degrees of freedom along the discrete interface $\Gamma_h$, while preserving the weak continuity of nonconforming elements.

The remainder of this paper is organized as follows. Section \ref{sec:preliminaries} introduces fundamental definitions and notations essential to our framework. Section \ref{sec:NFEM} then develops the nonconforming finite element method: Subsection \ref{subsec:RNE} constructs the rotated $Q_1$ element on quadrilateral meshes and establishes its stability; Subsection \ref{subsec:WF} formulates the continuous and discrete weak formulations while specifying regularity assumptions; Subsection \ref{subsec:inf-sup} provides a rigorous proof of the inf-sup condition; and Subsection \ref{subsec:prior} derives a priori error estimates for the proposed method. Section \ref{sec:example} presents numerical experiments that validate the theoretical results.

\section{Notation and preliminaries}\label{sec:preliminaries}
	For integer $r\geq 0$, define the piecewise $H^{r}$ Sobolev space
\begin{equation*}
	H^{r}(\Omega_{1}\cup\Omega_{2})=\{v\in L^2(\Omega);v|_{\Omega_{i}}\in H^{r}(\Omega_{i}),i=1,2\},
\end{equation*}
equipped with the norm and semi-norm
\begin{eqnarray*}\label{norm}
	\begin{aligned}
		\|v\|_{H^{r}(\Omega_{1}\cup\Omega_{2})}&=(\|v\|_{H^{r}(\Omega_1)}^{2}
		+\|v\|_{H^{r}(\Omega_2)}^{2})^{1/2},\\
		|v|_{H^{r}(\Omega_{1}\cup\Omega_{2})}&=(|v|_{H^{r}(\Omega_1)}^{2}
		+|v|_{H^{r}(\Omega_2)}^{2})^{1/2}.
	\end{aligned}
\end{eqnarray*}
Furthermore, let $\tilde{H}^{r}(\Omega_1\cup\Omega_2)=H^{1}_{0}(\Omega)\cap H^{r}(\Omega_{1}\cup\Omega_{2}).$
%In the case $p=2$, we use $\tilde{H}^{r}(\Omega_{1}\cup\Omega_{2})$ to represent $\tilde{W}^{r,2}(\Omega_{1}\cup\Omega_{2})$.
\begin{figure}[ht]
	\centering
	% ———— (a) 单位正方形与圆 ————
	\begin{minipage}[b]{0.31\textwidth}
		\centering
		\textbf{(a)}\\[1ex]
		\begin{tikzpicture}[scale=3.5]
			\draw[black, thick] (0,0) rectangle (1,1);
			\draw[blue, thick] (0.5,0.5) circle[radius=0.3];
			\node at (0.2,0.2) {$\Omega_{1}$};
			\node at (0.5,0.5) {$\Omega_{2}$};
			\node[blue] at (0.5,0.82) {$\Gamma$};
		\end{tikzpicture}
		\label{subfig:domain}
	\end{minipage}
	\hfill
	% ———— (b) 网格 + 圆 + 界面单元高亮 ————
	\begin{minipage}[b]{0.31\textwidth}
		\centering
		\textbf{(b)}\\[1ex]
		\begin{tikzpicture}[scale=3.5]
			\def\n{8}
			\pgfmathsetmacro{\h}{1/\n}
			% 高亮被圆穿过的三角形
			\foreach \i in {0,...,7} {
				\foreach \j in {0,...,7} {
					\pgfmathsetmacro{\x}{\i/\n}
					\pgfmathsetmacro{\y}{\j/\n}
					% 定义顶点
					\coordinate (A) at (\x,\y);
					\coordinate (B) at (\x+\h,\y);
					\coordinate (C) at (\x+\h,\y+\h);
					\coordinate (D) at (\x,\y+\h);
					% 水平集 φ = (x-0.5)^2+(y-0.5)^2-0.3^2
					\pgfmathsetmacro{\phiA}{(\x-0.5)^2+(\y-0.5)^2-0.09}
					\pgfmathsetmacro{\phiB}{(\x+\h-0.5)^2+(\y-0.5)^2-0.09}
					\pgfmathsetmacro{\phiC}{(\x+\h-0.5)^2+(\y+\h-0.5)^2-0.09}
					\pgfmathsetmacro{\phiD}{(\x-0.5)^2+(\y+\h-0.5)^2-0.09}
					% 三角形 ABC
					\pgfmathsetmacro{\minABC}{min(min(\phiA,\phiB),\phiC)}
					\pgfmathsetmacro{\maxABC}{max(max(\phiA,\phiB),\phiC)}
					\ifdim\minABC pt<0pt
					\ifdim\maxABC pt>0pt
					\fill[yellow!60] (A) -- (B) -- (C) -- cycle;
					\fi
					\fi
					% 三角形 ACD
					\pgfmathsetmacro{\minACD}{min(min(\phiA,\phiC),\phiD)}
					\pgfmathsetmacro{\maxACD}{max(max(\phiA,\phiC),\phiD)}
					\ifdim\minACD pt<0pt
					\ifdim\maxACD pt>0pt
					\fill[yellow!60] (A) -- (C) -- (D) -- cycle;
					\fi
					\fi
				}
			}
			% 黑色网格与对角线
			\foreach \i in {0,...,7} {
				\foreach \j in {0,...,7} {
					\pgfmathsetmacro{\x}{\i/\n}
					\pgfmathsetmacro{\y}{\j/\n}
					\draw[black, thin] (\x,\y) rectangle ++(\h,\h);
					\draw[black, thin] (\x,\y) -- ++(\h,\h);
				}
			}
			% 最上层画圆
			\draw[blue, thick] (0.5,0.5) circle[radius=0.3];
			% 区域标记
			\node[inner sep=0.8pt] at (0.2,0.85) {$\mathcal{T}_h^{N}$};
			\node[inner sep=0.8pt] at (0.75,0.75) {$\mathcal{T}_h^{\Gamma}$};
		\end{tikzpicture}
		\label{subfig:unfitted}
	\end{minipage}
	\hfill
	% ———— (c) 多边形逼近并标记 Γ_h ————
	\begin{minipage}[b]{0.31\textwidth}
		\centering
		\textbf{(c)}\\[1ex]
		\begin{tikzpicture}[scale=3.5]
			\def\n{8}
			\pgfmathsetmacro{\h}{1/\n}
			% 黑色网格
			\foreach \i in {0,...,7} {
				\foreach \j in {0,...,7} {
					\pgfmathsetmacro{\x}{\i/\n}
					\pgfmathsetmacro{\y}{\j/\n}
					\draw[black, thin] (\x,\y) rectangle ++(\h,\h);
					\draw[black, thin] (\x,\y) -- ++(\h,\h);
				}
			}
			% 圆轮廓
			%\draw[blue, thick] (0.5,0.5) circle[radius=0.3];
			% 凸多边形逼近
			\draw[red, thick]
			(0.8,0.5)  -- (0.75,0.65) -- (0.65,0.75) -- (0.5,0.8)  --
			(0.35,0.75) -- (0.25,0.65) -- (0.2,0.5)  -- (0.25,0.35) --
			(0.35,0.25) -- (0.5,0.2)   -- (0.65,0.25) -- (0.75,0.35) -- cycle;
			% 标记多边形为 Γ_h
			\node[red] at (0.5,0.85) {$\Gamma_h$};
		\end{tikzpicture}
		\label{subfig:fitted}
	\end{minipage}
	
	\caption{Geometric interface and mesh interaction:
		(a) the computational domain for the interface problem;
		(b) unfitted mesh~$\mathcal{T}_h$;
		(c) local anisotropic hybrid mesh~$\tilde{\mathcal{T}}_h$.}
	\label{fig:mesh}
\end{figure}
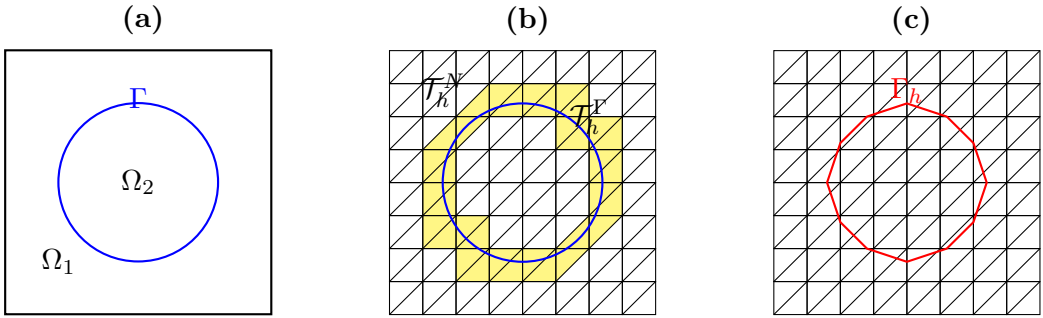	

We initiate the process by generating an interface-unfitted mesh $\mathcal{T}_h$, which serves as the background mesh (see Figure \ref{fig:mesh}(b)). By sequentially connecting the intersection points of the interface $\Gamma$ (blue line) and the mesh edges, a polygonal approximation $\Gamma_h$ (red line) of the interface $\Gamma$ is constructed. The resulting mesh, denoted by $\widetilde{\mathcal{T}}_h$ (see Figure \ref{fig:mesh}(c)), is an interface-fitted mesh that contains anisotropic triangles and quadrilaterals in the vicinity of the interface. The domain $\Omega$ is thereby partitioned into two polygonal subdomains $\Omega_{1,h}$ and $\Omega_{2,h}$ by $\Gamma_h$, which serve as approximations to $\Omega_1$ and $\Omega_2$, respectively.

Define the following mesh subsets:
\begin{align}
	&\mathcal{T}_h^{\Gamma} := \{K \in \mathcal{T}_h \,;\, K \cap \Gamma \neq \emptyset\},\\
	&\mathcal{T}_h^N := \mathcal{T}_h \setminus \mathcal{T}_h^{\Gamma}. 
\end{align}
Elements in $\mathcal{T}_h^{\Gamma}$ are referred to as interface elements. The mesh $\widetilde{\mathcal{T}}_h$ can be regarded as a refinement of $\mathcal{T}_h$. Let $\widetilde{\mathcal{E}}_h$ denote the set of all edges in $\widetilde{\mathcal{T}}_h$. Define $\widetilde{\mathcal{T}}_{h,i}$ as the subset of elements in $\widetilde{\mathcal{T}}_h$ that lie within $\Omega_{i,h}$. Let $\widetilde{\mathcal{E}}_h^{\Gamma}$ denote the collection of edges that coincide with $\Gamma_h$, and $\widetilde{\mathcal{E}}_h^N := \widetilde{\mathcal{E}}_h \setminus \widetilde{\mathcal{E}}_{h}^{\Gamma}$. Additionally, we denote the set of boundary edges by $\widetilde{\mathcal{E}}_h^0$.

\section{The nonconforming method}\label{sec:NFEM}
\subsection{The rotated nonconforming element space}\label{subsec:RNE}
%define space
%unisolvence
%interpolation error
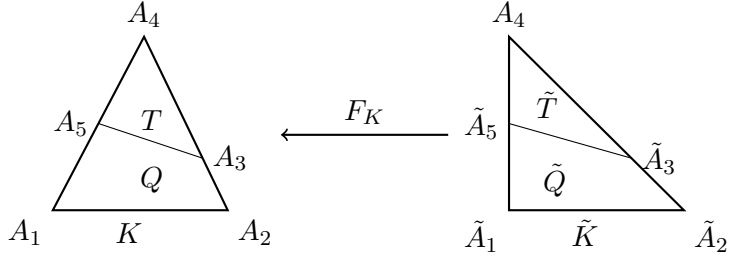
\begin{figure}[ht]
	\centering
	\begin{tikzpicture}[scale=1]
		% —— 左侧：锐角三角形 K ——
		\coordinate (A1) at (0,0);
		\coordinate (A2) at (2.3,0);
		\coordinate (A4) at (1.2,2.3);
		\draw[thick] (A1) -- (A2) -- (A4) -- cycle;
		\node at (1,-0.3) {$K$};
		
		% 分割点：A3 在 A2A4 边上靠近 A2 的三等分点，A5 在 A1A4 边上靠近 A4 的四等分点
		\coordinate (A3) at ($(A2)!0.3!(A4)$);
		\coordinate (A5) at ($(A1)!0.5!(A4)$);
		\draw[thin] (A3) -- (A5);
		
		% 标注顶点
		\node[below left]  at (A1) {$A_1$};
		\node[below right] at (A2) {$A_2$};
		\node[above]       at (A4) {$A_4$};
		\node[right]       at (A3) {$A_3$};
		\node[left]        at (A5) {$A_5$};
		
		% （可选）标注子区域 T 和 Q
		\node at (1.3,1.2) {$T$};
		\node at (1.3,0.4) {$Q$};
		
		% —— 箭头：仿射变换 F_K ——
		\draw[->, thick](5.2,1)  -- node[above] {$F_K$} (3,1);
		
		% —— 右侧：直角三角形 \hat K ——
		\begin{scope}[xshift=6cm]
			\coordinate (Ah1) at (0,0);
			\coordinate (Ah2) at (2.3,0);
			\coordinate (Ah4) at (0,2.3);
			\draw[thick] (Ah1) -- (Ah2) -- (Ah4) -- cycle;
			\node at (1,-0.3) {$\tilde K$};
			
			% 分割点
			\coordinate (Ah3) at ($(Ah2)!0.3!(Ah4)$);
			\coordinate (Ah5) at ($(Ah1)!0.5!(Ah4)$);
			\draw[thin] (Ah3) -- (Ah5);
			
			% 标注顶点
			\node[below left]  at (Ah1) {$\tilde A_1$};
			\node[below right] at (Ah2) {$\tilde A_2$};
			\node[above]       at (Ah4) {$\tilde A_4$};
			\node[right]       at (Ah3) {$\tilde A_3$};
			\node[left]        at (Ah5) {$\tilde A_5$};
			
			% （可选）标注子区域 \hat T 和 \hat Q
			\node at (0.5,1.4) {$\tilde T$};
			\node at (0.6,0.4) {$\tilde Q$};
		\end{scope}
	\end{tikzpicture}
	\caption{The interface macro element}
	\label{fig:macro-element}
\end{figure}
The following discussion concerns the construction and properties of basis functions on interface elements, along with the associated interpolation error estimates. Consider a general interface element $K \in \mathcal{T}_h^{\Gamma}$ as illustrated in Figure~\ref{fig:macro-element}. Define the cut ratio parameters by
\begin{equation*}
	t = \frac{|A_1A_5|}{|A_1A_4|}, \quad s =  \frac{|A_2A_3|}{|A_2A_4|}.
\end{equation*}
Clearly, $0 \leq s, t < 1$. Without loss of generality, we assume $s \leq t$; otherwise, we apply a reflection transformation to satisfy this condition. Note that when $t = 1$, $\tilde{A}_5$ coincides with $\tilde{A}_4$, which we exclude to avoid degeneracy. An affine mapping $\bm{F}_K$ maps the physical interface element $K$ to a reference element $\tilde{K}$:
\begin{equation}
	\tilde{x} = \bm{F}_K(\tilde{\bm{x}}) = \bm{B} \tilde{\bm{x}} + \bm{b}.
\end{equation}
The reference coordinates of the vertices $\tilde{A}_1, \dots, \tilde{A}_5$ are
\begin{equation*}
	\tilde{A}_1 = (0, 0),\quad \tilde{A}_2 = (1, 0),\quad \tilde{A}_3 = (1 - s, s),\quad \tilde{A}_4 = (0, 1),\quad \tilde{A}_5 = (0, t).
\end{equation*}
Under reasonable assumptions (see Assumption 3.1 in \cite{Ji2023}), we consider the following configurations: 

\textbf{Case I.} The interface passes through a vertex of $K$ ($s = 0$). In this case, $K$ is divided into two triangles, both satisfying the maximum angle condition (see \cite{Hu2021optimal}). This case is straightforward to handle, as standard CR elements are used for both sub-triangles.

\textbf{Case II.} The interface intersects the interior of two edges of $K$ ($0 < s \leq t < 1$). In this case, $K$ is divided into a triangle and a quadrilateral. For this case, we utilize standard CR elements for the triangular sub-element and a rotated $Q_1$-type element for the quadrilateral sub-element. Since the quadrilateral sub-element may be anisotropic and potentially degenerate, the construction of a stable rotated $Q_1$ element constitutes a primary focus of this work.
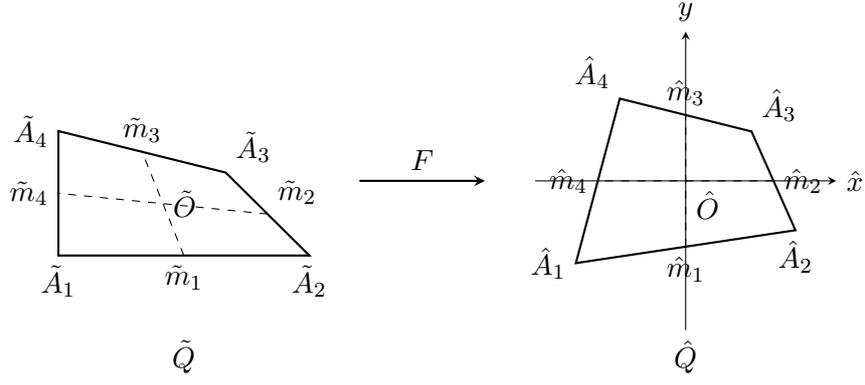
\begin{figure}[ht] 
	\centering
	\begin{tikzpicture}[scale=3.3]
		\def\s{1/3}  % s 值
		\def\t{1/2}  % t 值
		\def\k{1.4}
		\begin{scope}[yshift=-0.3cm]
			% 设置参数
			% ====== 左侧图：原始四边形 ======
			% 顶点坐标
			\coordinate (A1) at (0,0);
			\coordinate (A2) at (1,0);
			\coordinate (A3) at (1-\s,\s);
			\coordinate (A4) at (0,\t);
			
			% 计算中点
			\coordinate (m1) at ($(A1)!0.5!(A2)$);
			\coordinate (m2) at ($(A2)!0.5!(A3)$);
			\coordinate (m3) at ($(A3)!0.5!(A4)$);
			\coordinate (m4) at ($(A1)!0.5!(A4)$);
			\coordinate (O) at ($(m1)!0.5!(m3)$);
			
			% 绘制四边形
			\draw[thick] (A1) -- (A2) -- (A3) -- (A4) -- cycle;
			
			% 绘制虚线连接
			\draw[dashed] (m1) -- (m3);
			\draw[dashed] (m2) -- (m4);
			
			% 标记顶点
			\node[below] at (A1) {$\tilde{A}_1$};
			\node[below] at (A2) {$\tilde{A}_2$};
			\node[above right] at (A3) {$\tilde{A}_3$};
			\node[left] at (A4) {$\tilde{A}_4$};
			
			% 标记中点
			\node[below] at (m1) {$\tilde{m}_1$};
			\node[above right] at (m2) {$\tilde{m}_2$};
			\node[above] at (m3) {$\tilde{m}_3$};
			\node[left] at (m4) {$\tilde{m}_4$};
			\node[right] at (O) {$\tilde{O}$};
			\node[below] at (1/2,-0.3) {$\tilde{Q}$};
		\end{scope}
		
		\draw[->, thick, >=stealth] (1.2,0) -- node[above] { $F$} (1.7,0);
		% ====== 右侧图：仿射变换后的四边形 ======
		\begin{scope}[xshift=2.5cm]
			% 预计算分母
			\pgfmathsetmacro{\denomOne}{sqrt(\t/2 - \s/2 + (\s/2 - 1)^2)}
			\pgfmathsetmacro{\denomTwo}{sqrt(\s^2/4 + \s/2 + \t/2)}
			
			% 计算变换后的中点坐标
			\pgfmathsetmacro{\hatmOneX}{0}
			\pgfmathsetmacro{\hatmOneY}{-(\t/4 - \s/4 + (\t*(\s/2 - 1))/2)/\denomOne}
			
			\pgfmathsetmacro{\hatmTwoX}{-(\s/4 + \t/4 - (\s*\t)/4)/\denomTwo}
			\pgfmathsetmacro{\hatmTwoY}{0}
			
			\pgfmathsetmacro{\hatmThreeX}{0}
			\pgfmathsetmacro{\hatmThreeY}{((\s*(\s/2 - 1))/2 - (\s/2 - \t/2)*(\s/2 - 1/2))/\denomOne}
			
			\pgfmathsetmacro{\hatmFourX}{-((\s/2 + \t/2)*(\s/2 - 1/2) - \s^2/4)/\denomTwo}
			\pgfmathsetmacro{\hatmFourY}{0}
			
			% 计算变换后的顶点坐标
			\pgfmathsetmacro{\hatAOneX}{-((\s/2 + \t/2)*(\s/2 - 1/2) - (\s*(\s/2 + \t/2))/2)/\denomTwo}
			\pgfmathsetmacro{\hatAOneY}{-(\t*(\s/2 - 1))/(2*\denomOne)}
			
			\pgfmathsetmacro{\hatATwoX}{-((\s/2 + \t/2)*(\s/2 + 1/2) - (\s*(\s/2 + \t/2))/2)/\denomTwo}
			\pgfmathsetmacro{\hatATwoY}{-(\t/2 - \s/2 + (\t*(\s/2 - 1))/2)/\denomOne}
			
			\pgfmathsetmacro{\hatAThreeX}{((\s/2 + \t/2)*(\s/2 - 1/2) - (\s*(\s/2 - \t/2))/2)/\denomTwo}
			\pgfmathsetmacro{\hatAThreeY}{-((\s/2 - \t/2)*(\s - 1) - (\s/2 - 1)*(\s - \t/2))/\denomOne}
			
			\pgfmathsetmacro{\hatAFourX}{-((\s/2 + \t/2)*(\s/2 - 1/2) - (\s*(\s/2 - \t/2))/2)/\denomTwo}
			\pgfmathsetmacro{\hatAFourY}{(\t*(\s/2 - 1))/(2*\denomOne)}
			
			% 定义变换后的点
			\coordinate (hatA1) at (-\k*\hatAOneX, -\k*\hatAOneY);
			\coordinate (hatA2) at (-\k*\hatATwoX, -\k*\hatATwoY);
			\coordinate (hatA3) at (-\k*\hatAThreeX, -\k*\hatAThreeY);
			\coordinate (hatA4) at (-\k*\hatAFourX, -\k*\hatAFourY);
			
			\coordinate (hatm1) at (-\k*\hatmOneX, -\k*\hatmOneY);
			\coordinate (hatm2) at (-\k*\hatmTwoX, -\k*\hatmTwoY);
			\coordinate (hatm3) at (-\k*\hatmThreeX, -\k*\hatmThreeY);
			\coordinate (hatm4) at (-\k*\hatmFourX, -\k*\hatmFourY);
			
			% 绘制坐标轴
			\draw[->, >=stealth] (-0.6,0) -- (0.6,0) node[right] {$\hat{x}$};
			\draw[->, >=stealth] (0,-0.6) -- (0,0.6) node[above] {$\hat{y}$};
			
			% 绘制变换后的四边形
			\draw[thick] (hatA1) -- (hatA2) -- (hatA3) -- (hatA4) -- cycle;
			
			% 绘制虚线连接
			\draw[dashed] (hatm1) -- (hatm3);
			\draw[dashed] (hatm2) -- (hatm4);
			
			% 标记变换后的点
			\node[left] at (hatA1) {$\hat{A}_1$};
			\node[below] at (hatA2) {$\hat{A}_2$};
			\node[above right] at (hatA3) {$\hat{A}_3$};
			\node[above left] at (hatA4) {$\hat{A}_4$};
			
			% 标记变换后的中点
			\node[below] at (hatm1) {$\hat{m}_1$};
			\node[right] at (hatm2) {$\hat{m}_2$};
			\node[above] at (hatm3) {$\hat{m}_3$};
			\node[left] at (hatm4) {$\hat{m}_4$};
			
			% 添加标签
			\node[below right] at (0,0) {$\hat{O}$};
			\node[below] at (0,-0.6) {$\hat{Q}$};
		\end{scope}
	\end{tikzpicture}\label{fig:transformation}
	\caption{An affine map from a reference quadrilateral $\hat{Q}$ to a quadrilateral $\tilde{Q}$}
\end{figure}

%------------------------------
By direct calculation, the equations for lines $\tilde{m}_1\tilde{m}_3$ and $\tilde{m}_2\tilde{m}_4$ are:
\begin{align}
	l_{13}(\tilde{x},\tilde{y}) &= \frac{s + t}{|\tilde{Q}|}(\tilde{x} - \frac{1}{2}) + \frac{s}{|\tilde{Q}|}\tilde{y}, \label{eq:l13} \\
	l_{24}(\tilde{x},\tilde{y}) &= \frac{t - s}{|\tilde{Q}|}\tilde{x} + \frac{2 - s}{|\tilde{Q}|}\tilde{y} - \frac{t(2 - s)}{2|\tilde{Q}|}, \label{eq:l24}
\end{align}
where $|\tilde{Q}| = \frac{1}{2}(s + t - st)$ denotes the area of $\tilde{Q}$. An affine transformation $\bm{F}$ maps points $\tilde{m}_i$ to reference points $\hat{m}_i$ at $(0,-1)$, $(1,0)$, $(0,1)$, and $(-1,0)$ (see Figure \ref{fig:transformation}):
\begin{equation*}
	\begin{pmatrix} \hat{x} \\ \hat{y} \end{pmatrix} = 
	\begin{pmatrix} l_{13}(\tilde{x},\tilde{y}) \\ l_{24}(\tilde{x},\tilde{y}) \end{pmatrix}.
\end{equation*}
This transformation yields the vertex coordinates:
\begin{equation*}\label{eq:transformed-vertices}
	\hat{A}_1(-c_1, -c_2), \quad \hat{A}_2(c_1, -2+c_2), \quad \hat{A}_3(2 - c_1, 2-c_2), \quad \hat{A}_4(-2+c_1, c_2),
\end{equation*}
with $c_1 = \dfrac{s + t}{2|\tilde{Q}|}$ and $c_2 = \dfrac{(2 - s)t}{2|\tilde{Q}|}$. It is easy to verify that these coefficients satisfy the bounds:
\begin{equation*}
	1 \leq c_1 \leq 2, \quad \frac{1}{2} \leq c_2 \leq 2.	
\end{equation*}
The finite element triple $(\hat{Q}, \mathcal{P}_{\hat{Q}}, \mathcal{N})$ is defined as:
\begin{equation}\label{def:FE}
	\mathcal{P}_{\hat{Q}} = P_1 \oplus \{\hat{x}^2\}, \quad
	\mathcal{N} = \{\mathcal{N}_1, \dots, \mathcal{N}_4\}, \quad \mathcal{N}_i(\hat{v}) = \frac{1}{|\hat{e}_i|}\int_{\hat{e}_i}\hat{v}  ds.
\end{equation}
The corresponding Vandermonde matrix is:
\begin{equation}
	\bm{M} = \begin{pmatrix}
		1 &  0 & -1 &  c_1^2/3 \\
		1 &  1 &  0 &  (c_1 - 2)^2/6 + c_1^2/6 + 2/3 \\
		1 &  0 &  1 &  (c_1 - 2)^2/3 \\
		1 & -1 &  0 &  (c_1 - 2)^2/6 + c_1^2/6 + 2/3 
	\end{pmatrix}.
\end{equation}

\begin{lemma}
	The degrees of freedom $\{\mathcal{N}_i\}_{i=1}^4$ are unisolvent for $\mathcal{P}_{\hat{Q}}$.
\end{lemma}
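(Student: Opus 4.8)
The plan is to invoke the standard equivalence: since $\dim \mathcal{P}_{\hat{Q}} = \dim\{1,\hat x,\hat y,\hat x^2\} = 4$ equals the cardinality of $\mathcal{N}$, the functionals $\{\mathcal{N}_i\}_{i=1}^4$ are unisolvent for $\mathcal{P}_{\hat Q}$ precisely when the Vandermonde matrix $\bm M$ displayed above---whose $(i,j)$ entry is $\mathcal{N}_i$ evaluated on the $j$th basis monomial $1,\hat x,\hat y,\hat x^2$---is nonsingular. So the whole task reduces to checking $\det\bm M \neq 0$ for all admissible geometries, i.e.\ for $c_1 \in [1,2]$ (indeed for every $c_1$), noting that $\bm M$ depends only on $c_1$.

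First I would briefly justify the entries of $\bm M$: each row is obtained by integrating $1,\hat x,\hat y,\hat x^2$ over an edge $\hat e_i$ of the reference quadrilateral $\hat A_1\hat A_2\hat A_3\hat A_4$ and dividing by $|\hat e_i|$. The first three columns are averages of the corresponding coordinate over the two endpoints (immediate from the vertex formulas $\hat A_i(\pm c_1,\cdot),(\pm(2-c_1),\cdot)$), and the last column uses $\tfrac1{|\hat e|}\int_{\hat e}\hat x^2\,ds = \tfrac13(\ell^2+\ell\ell'+\ell'^2)$ for an edge whose endpoints have abscissas $\ell,\ell'$. Since $\bm M$ is already given in the statement, this step can be recorded in a line or two.

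The core is then a short determinant computation. The second and fourth rows of $\bm M$ share the same last entry, so the row operation $R_2 \mapsto R_2 - R_4$ replaces the second row by $(0,2,0,0)$ without changing the determinant; expanding along it gives $\det\bm M = 2\det\widetilde{\bm M}$ with
\[
\widetilde{\bm M} = \begin{pmatrix} 1 & -1 & c_1^2/3 \\ 1 & 1 & (c_1-2)^2/3 \\ 1 & 0 & (c_1-2)^2/6 + c_1^2/6 + 2/3 \end{pmatrix}.
\]
Expanding $\widetilde{\bm M}$ along its first column yields $\det\widetilde{\bm M} = 2\big[(c_1-2)^2/6+c_1^2/6+2/3\big] - c_1^2/3 - (c_1-2)^2/3 = 4/3$, so $\det\bm M = 8/3 \neq 0$, and the unisolvence follows.

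There is no genuine obstacle; the one place that needs care is the final algebraic expansion, where the $c_1$-dependent terms must be tracked to confirm they cancel exactly. It is worth stressing in the write-up that $\det\bm M$ is not merely nonzero but a fixed positive constant, independent of $c_1$ and hence of the cut ratios $s,t$; this uniformity---the Vandermonde matrix does not degenerate as $s,t\to 0$ or as $\tilde Q$ becomes arbitrarily anisotropic---is precisely the property the subsequent stability estimates for the rotated $Q_1$ element will rely on.
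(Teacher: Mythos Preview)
Your proposal is correct and follows exactly the paper's approach: both arguments reduce unisolvence to the nonvanishing of $\det\bm M$ and obtain the same value $\det\bm M = 8/3$, independent of $c_1$. Your write-up simply fills in the determinant computation that the paper states without detail, and your emphasis on the uniformity in $c_1$ (hence in $s,t$) is well placed for the downstream stability results.
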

\begin{proof}
	It suffices to show that the homogeneous system has only the trivial solution. Assume $p(\hat{x},\hat{y}) = \alpha_0 + \alpha_1 \hat{x} + \alpha_2 \hat{y} + \alpha_3 \hat{x}^2$ 
	satisfies $\mathcal{N}_i(p) = 0$ for $1 \leq i \leq 4$.	since the determinant of the Vandermonde matrix $\bm{M}$ is $8/3 \neq 0$, thus $\alpha_0 = \alpha_1 = \alpha_2 = \alpha_3 = 0$.
\end{proof}

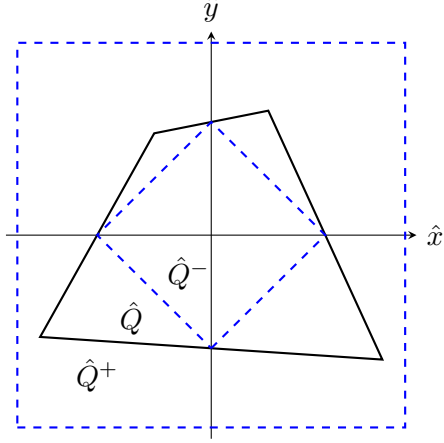
\begin{figure}[ht] 
	\centering
	\begin{tikzpicture}[scale=1.5]
		% 预计算参数
		\def\a{1.5}  % s 值
		\def\b{0.9}  % t 值	
		
		% 定义变换后的点
		\coordinate (hatA1) at (-\a,-\b);
		\coordinate (hatA2) at (\a,\b-2);
		\coordinate (hatA3) at (2-\a,2-\b);
		\coordinate (hatA4) at (\a-2, \b);
		
		\coordinate (hatAA1) at (-1.7, -1.7);
		\coordinate (hatAA2) at (1.7,-1.7);
		\coordinate (hatAA3) at (1.7,1.7);
		\coordinate (hatAA4) at (-1.7,1.7);
		
		\coordinate (hatm1) at (0,-1);
		\coordinate (hatm2) at (1,0);
		\coordinate (hatm3) at (0,1);
		\coordinate (hatm4) at (-1,0);
		
		% 绘制坐标轴
		\draw[->, >=stealth] (-1.8,0) -- (1.8,0) node[right] {$\hat{x}$};
		\draw[->, >=stealth] (0,-1.8) -- (0,1.8) node[above] {$\hat{y}$};
		
		% 四边形为实线
		\draw[thick, solid] (hatA1) -- (hatA2) -- (hatA3) -- (hatA4) -- cycle;
		\draw[blue,thick, dashed] (hatAA1) -- (hatAA2) -- (hatAA3) -- (hatAA4) -- cycle; 
		
		% 绘制内部正方形（中点连接）
		\draw[blue, thick,dashed] (hatm1) -- (hatm2) -- (hatm3) -- (hatm4) -- cycle;

		% 添加标签
		\node[below] at (-0.2,-0.1) {$\hat{Q}^-$};
		\node[below] at (-0.7,-0.5) {$\hat{Q}$};
		\node[below] at (-1,-1) {$\hat{Q}^+$};
	\end{tikzpicture}\label{fig:hatQ}
	\caption{The reference quadrilateral $\hat{Q}$}
\end{figure}
Therefore, we can define the nonconforming element space on the locally anisotropic hybrid mesh $\widetilde{\mathcal{T}}_h$ as
\begin{equation}\label{def:velocity-space}
	U_h = \left\{ v \in L^2(\Omega) \;\middle|\; v|_K \in \mathcal{P}(K)\ \forall K \in \widetilde{\mathcal{T}}_h,\ \int_e [v]\,\mathrm{d}s = 0\ \forall e \in \widetilde{\mathcal{E}}_h \right\}.
\end{equation}
If $K$ is a triangle element $\mathcal{P}(K)=\mathcal{P}_1(K)$. If $K$ is a quadrilateral element, $\mathcal{P}(K)$ is defined as the pullback of $\mathcal{P}_{\hat{Q}}$ as in \eqref{def:FE}. Define the piecewise $H^1$ semi-norm as
\begin{equation}\label{eq:energy-norm}
	\| v_h \|_{U_h} = (\sum_{K \in \mathcal{T}_h} \int_K |\nabla v_h|^2  dx)^{1/2}.
\end{equation}
This defines a norm on $U_h$ due to the Dirichlet boundary conditions.

Note that the reference quadrilateral $\hat{Q}$ is inscribed in a larger square $\hat{Q}^+$ with side length $4$, while its interior contains a smaller square $\hat{Q}^-$ with side length $\sqrt{2}$ (see Figure \ref{fig:hatQ}). Therefore, For any $\hat{v} \in \mathcal{P}_k(\hat{Q})$, we have
\begin{equation}\label{bound_hatQ}
\begin{aligned}	
	|\hat{v}|_{H^r(\hat{Q}^-)} \lesssim |\hat{v}|_{H^r(\hat{Q})} \lesssim |\hat{v}|_{H^r(\hat{Q}^+)}.
\end{aligned}
\end{equation}
By \eqref{bound_hatQ}, we derive the following lemma which means the basis functions $\hat{\phi}_i$ defined on the nonstandard reference element $\hat{Q}$ (see Definition \ref{def:FE}) satisfy the same order bounds as those on the reference square element:
\begin{lemma}\label{lem:basis_bound}
	For basis functions $\hat{\phi}_i$ defined by \eqref{def:FE}, we have
	\begin{align}
		|\hat{\phi}_i|_{H^r(\hat{Q})} \lesssim 1\quad r =0,1, 
	\end{align} 
	where the hidden constant is independent of the element geometry.
\end{lemma}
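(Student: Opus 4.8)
The plan is to read the basis functions $\hat{\phi}_i$ off the inverse of the Vandermonde matrix $\bm{M}$ and to exploit the fact that $\bm{M}$ varies within a compact family while its determinant, $\det\bm{M}=8/3$, is a \emph{fixed} nonzero number; consequently $\bm{M}^{-1}$, and hence the monomial coefficients of each $\hat{\phi}_i$, are bounded by absolute constants. Concretely, expand each basis function in the monomial basis of $\mathcal{P}_{\hat{Q}}=P_1\oplus\{\hat{x}^2\}$,
\begin{equation*}
	\hat{\phi}_i(\hat{x},\hat{y}) = a_0^{(i)} + a_1^{(i)}\hat{x} + a_2^{(i)}\hat{y} + a_3^{(i)}\hat{x}^2 .
\end{equation*}
The duality relations $\mathcal{N}_j(\hat{\phi}_i)=\delta_{ij}$, together with the definition \eqref{def:FE} of the functionals, force the coefficient vector $(a_0^{(i)},\dots,a_3^{(i)})^\top$ to be the $i$-th column of $\bm{M}^{-1}$. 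Thus it suffices to prove (i) $\max_{i,k}|a_k^{(i)}|\lesssim 1$ uniformly in the cut ratios $s,t$, and (ii) that each of the four monomials $\psi_0=1,\psi_1=\hat{x},\psi_2=\hat{y},\psi_3=\hat{x}^2$ has $H^r(\hat{Q})$-seminorm bounded by an absolute constant for $r\in\{0,1\}$; the conclusion then follows from the triangle inequality applied to the expansion above.

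For (i) I would use the adjugate formula $\bm{M}^{-1}=(8/3)^{-1}\,\mathrm{adj}(\bm{M})$. Inspecting the explicit form of $\bm{M}$, every entry is a polynomial in the single parameter $c_1$ (note $c_2$ does not occur), and since $1\le c_1\le 2$ by the bounds recorded just after the definition of $c_1,c_2$, all entries of $\bm{M}$, and therefore all of its $3\times 3$ cofactors, are bounded in absolute value by a universal constant. Dividing by the fixed determinant $8/3$ gives $\max_{i,j}|(\bm{M}^{-1})_{ij}|\lesssim 1$ independently of $s,t$, which is exactly (i).

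For (ii), recall that $\hat{Q}$ is inscribed in the fixed square $\hat{Q}^+$ of side length $4$ centred at the origin (the square appearing in \eqref{bound_hatQ}). Since an $H^r$-seminorm is the integral over the domain of a nonnegative density, $|\hat{v}|_{H^r(\hat{Q})}\le|\hat{v}|_{H^r(\hat{Q}^+)}$ for every polynomial $\hat{v}$ — this is the rightmost estimate in \eqref{bound_hatQ} — and for each $\psi_k$ the right-hand side is an explicit finite constant when $r=0$ or $r=1$. Combining (i) and (ii) yields $|\hat{\phi}_i|_{H^r(\hat{Q})}\le\sum_{k=0}^{3}|a_k^{(i)}|\,|\psi_k|_{H^r(\hat{Q}^+)}\lesssim 1$ with a constant depending only on absolute numerical data, which is the assertion. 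I do not expect a genuine obstacle here; the only two points requiring (easy) care are the uniform boundedness of the entries of $\bm{M}$, which relies on the range $c_1\in[1,2]$, and the fact that $\det\bm{M}$ is not merely nonzero but equals the constant $8/3$, so that inverting $\bm{M}$ cannot introduce any geometry-dependent blow-up.
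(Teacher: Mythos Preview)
Your proposal is correct and follows essentially the same route as the paper: expand $\hat{\phi}_i$ in the monomial basis, bound the coefficients via $\bm{M}^{-1}=(\det\bm{M})^{-1}\mathrm{adj}(\bm{M})$ using that the entries of $\bm{M}$ are polynomials in $c_1\in[1,2]$ and that $\det\bm{M}=8/3$ is a fixed constant, and bound the monomial norms by passing to the enclosing square $\hat{Q}^+$ via \eqref{bound_hatQ}. The paper's argument is slightly terser (it writes only the $L^2$ case explicitly), but your version is the same idea with a bit more detail.
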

\begin{proof}
	By definition, $\hat{\phi}_i = \bm{M}^{-1} \bm{\delta}_i^{\top} (1, \hat{x}, \hat{y}, \hat{x}^2)$, where $\bm{M}$ is the Vandermonde matrix for the basis. 
	For any $\hat{p}\in \{1, \hat{x}, \hat{y}, \hat{x}^2\}$, since
	\begin{equation*}
		\|\hat{p}\|_{L^2(\hat{Q})}\lesssim \|\hat{p}\|_{L^2(\hat{Q}^+)}\lesssim 1,
	\end{equation*}
	it suffices to prove $\|\bm{M}^{-1} \bm{\delta}_i\|_{\infty} \lesssim 1$. The adjugate matrix $\bm{M}^*$ satisfies $\|\bm{M}^*\|_{\infty} = \mathcal{O}(1)$ as its entries are cofactors of $\bm{M}$. Thus,
	\[
	\|\bm{M}^{-1}\|_{\infty} = \left\| \frac{1}{\det(\bm{M})}\bm{M}^* \right\|_{\infty} \lesssim 1.
	\]
	Consequently, $\|\bm{M}^{-1} \bm{\delta}_i\|_{\infty} \lesssim \|\bm{M}^{-1}\|_{\infty} \|\bm{\delta}_i\|_{\infty} \lesssim 1$.
\end{proof}

The following lemma establishes the scaling relations for the $H^1$-seminorm between $\tilde{Q}$ and $\hat{Q}$, which provides upper bounds for the $H^1$-seminorm of basis functions on $\tilde{Q}$. These bounds will be essential for the inf-sup analysis later.

\begin{lemma}\label{lem:affine-est}
	For $\hat{v} \in H^1(\hat{Q})$, the partial derivatives satisfy anisotropy-adaptive estimates:
	\begin{align}
		\|\partial_{\tilde{x}} \tilde{v}\|_{L^2(\tilde{Q})} &\lesssim t^{1/2} |\hat{v}|_{H^1(\hat{Q})}, \\
		\|\partial_{\tilde{y}} \tilde{v}\|_{L^2(\tilde{Q})} &\lesssim t^{-1/2} |\hat{v}|_{H^1(\hat{Q})},
	\end{align}
	where the hidden constants are independent of $t$ and $s$.
\end{lemma}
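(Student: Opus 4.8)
The plan is to realize $\tilde v$ as the pull-back $\tilde v=\hat v\circ F$, where $F\colon\tilde Q\to\hat Q$ is the affine change of variables $(\hat x,\hat y)=\bigl(l_{13}(\tilde x,\tilde y),\,l_{24}(\tilde x,\tilde y)\bigr)$ of \eqref{eq:l13}--\eqref{eq:l24}, and then push the $L^2(\tilde Q)$-norms of $\partial_{\tilde x}\tilde v$ and $\partial_{\tilde y}\tilde v$ through the chain rule and a change of variables. Reading the linear parts of $l_{13},l_{24}$ off \eqref{eq:l13}--\eqref{eq:l24}, the (constant) Jacobian is
\[
	\bm C:=DF=\frac{1}{|\tilde Q|}\begin{pmatrix} s+t & s\\ t-s & 2-s\end{pmatrix},
\]
so that, $\bm C$ being constant,
\begin{gather*}
	\partial_{\tilde x}\tilde v=\frac{s+t}{|\tilde Q|}\,(\partial_{\hat x}\hat v)\circ F+\frac{t-s}{|\tilde Q|}\,(\partial_{\hat y}\hat v)\circ F,\\
	\partial_{\tilde y}\tilde v=\frac{s}{|\tilde Q|}\,(\partial_{\hat x}\hat v)\circ F+\frac{2-s}{|\tilde Q|}\,(\partial_{\hat y}\hat v)\circ F,
\end{gather*}
and, for any $g\in L^1(\hat Q)$, $\int_{\tilde Q}(g\circ F)\,d\tilde{\bm x}=|\det\bm C|^{-1}\int_{\hat Q}g\,d\hat{\bm x}$.

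The estimate then rests on two elementary facts I would record first. A direct computation gives
\[
	\det\bm C=\frac{(s+t)(2-s)-s(t-s)}{|\tilde Q|^2}=\frac{2(s+t-st)}{|\tilde Q|^2}=\frac{4}{|\tilde Q|},
\]
hence $|\det\bm C|^{-1}=|\tilde Q|/4$. And, because $0\le s\le t<1$, one has $s+t-st=t+s(1-t)\in[t,2t]$, so that
\[
	\frac t2\ \le\ |\tilde Q|=\tfrac12(s+t-st)\ \le\ t ;
\]
in particular $|\tilde Q|\sim t$ with constants independent of $s,t$. Combined with $0\le s\le t<1$, this yields the row-wise bounds $(s+t)^2+(t-s)^2\le 4t^2$ for the first row of $\bm C$ and $s^2+(2-s)^2\le 5$ for the second.

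Finally I would assemble everything: the Cauchy--Schwarz inequality applied to the two-term expressions above, then the change of variables with $|\det\bm C|^{-1}=|\tilde Q|/4$, and finally $|\tilde Q|\ge t/2$, give
\begin{align*}
	\|\partial_{\tilde x}\tilde v\|_{L^2(\tilde Q)}^2
	&= \frac{|\tilde Q|}{4}\int_{\hat Q}\Bigl|\tfrac{s+t}{|\tilde Q|}\,\partial_{\hat x}\hat v+\tfrac{t-s}{|\tilde Q|}\,\partial_{\hat y}\hat v\Bigr|^2 d\hat{\bm x}\\
	&\le \frac{|\tilde Q|}{4}\cdot\frac{4t^2}{|\tilde Q|^2}\,|\hat v|_{H^1(\hat Q)}^2
	= \frac{t^2}{|\tilde Q|}\,|\hat v|_{H^1(\hat Q)}^2
	\le 2t\,|\hat v|_{H^1(\hat Q)}^2,
\end{align*}
and likewise
\begin{align*}
	\|\partial_{\tilde y}\tilde v\|_{L^2(\tilde Q)}^2
	&= \frac{|\tilde Q|}{4}\int_{\hat Q}\Bigl|\tfrac{s}{|\tilde Q|}\,\partial_{\hat x}\hat v+\tfrac{2-s}{|\tilde Q|}\,\partial_{\hat y}\hat v\Bigr|^2 d\hat{\bm x}\\
	&\le \frac{|\tilde Q|}{4}\cdot\frac{5}{|\tilde Q|^2}\,|\hat v|_{H^1(\hat Q)}^2
	= \frac{5}{4|\tilde Q|}\,|\hat v|_{H^1(\hat Q)}^2
	\le \frac{5}{2t}\,|\hat v|_{H^1(\hat Q)}^2 .
\end{align*}
Taking square roots yields the two claimed inequalities (with $\lesssim$-constants $\sqrt 2$ and $\sqrt{5/2}$). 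I do not expect a genuine obstacle here: the argument is bookkeeping, and the only point needing care is recognizing that the anisotropy $t^{\pm1/2}$ is precisely the product of the disparate sizes of the two rows of $\bm C$ (order $t$ versus order $1$) with the two-sided bound $|\tilde Q|\sim t$; one must also confirm that no constant secretly depends on $s$ or $t$, which is automatic once $s+t-st\in[t,2t]$ and $0\le s\le t<1$ are in hand.
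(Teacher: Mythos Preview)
Your proof is correct and follows essentially the same approach as the paper: chain rule through the affine map $F$, change of variables with the Jacobian determinant, and the row-wise coefficient bounds combined with $|\tilde Q|\sim t$. You are simply more explicit than the paper, spelling out $\det\bm C=4/|\tilde Q|$, the two-sided bound $t/2\le|\tilde Q|\le t$, and the resulting numerical constants, whereas the paper leaves these implicit in its ``$\lesssim$'' steps.
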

\begin{proof}
	Applying the chain rule, we have
	\begin{equation}
		\tilde{\nabla} \tilde{v} = \bm{D\!F}^\top \hat{\nabla} \hat{v}.
	\end{equation}
	where 
	\begin{align}
		\bm{D\!F} &= \frac{1}{|\tilde{Q}|} \begin{pmatrix}
				t + s & s \\
				t - s & 2 - s
			\end{pmatrix}, \label{eq:jacobian} 
	\end{align}
	are the Jacobian matrix of the affine mapping $F$. For the $x$-derivative:
	\begin{align*}
		\|\partial_{\tilde{x}} \tilde{v}\|_{L^2(\tilde{Q})} 
		&= \left\| \frac{(t + s)\partial_{\hat{x}} \hat{v} + (t - s)\partial_{\hat{y}} \hat{v}}{|\tilde{Q}|} \right\|_{L^2(\tilde{Q})} \\
		&\lesssim |\det(\bm{D\!F})|^{-1/2} \left( |t + s| \|\partial_{\hat{x}} \hat{v}\|_{L^2(\hat{Q})} + |t - s| \|\partial_{\hat{y}} \hat{v}\|_{L^2(\hat{Q})} \right) \\
		&\lesssim t^{1/2} |\hat{v}|_{H^1(\hat{Q})}.
	\end{align*}
	Similarly for the $y$-derivative:
	\begin{align*}
		\|\partial_{\tilde{y}} \tilde{v}\|_{L^2(\tilde{Q})} 
		&= \left\| \frac{s\partial_{\hat{x}} \hat{v} + (2 - s)\partial_{\hat{y}} \hat{v}}{|\tilde{Q}|} \right\|_{L^2(\tilde{Q})} \\
		&\lesssim |\det(\bm{D\!F})|^{-1/2} \left( |s| \|\partial_{\hat{x}} \hat{v}\|_{L^2(\hat{Q})} + |2 - s| \|\partial_{\hat{y}} \hat{v}\|_{L^2(\hat{Q})} \right) \\
		&\lesssim t^{-1/2} |\hat{v}|_{H^1(\hat{Q})}.
	\end{align*}
\end{proof}

\begin{remark}\label{rmk:bubble-choice}
	From an approximation perspective, the bubble function for the nonconforming element on $\hat{Q}$ can be any quadratic term except $\hat{x}\hat{y}$. However, our analysis of the inf-sup condition requires interpolation stability within the function space. As established in Lemma \ref{lem:affine-est}, interpolation stability fails in the $y$-direction due to element anisotropy.	To address this issue, we select $\hat{x}^2$ as the quadratic term. This choice ensures that the shape function space is linear in $\hat{y}$, allowing us to leverage the special properties of linear functions to prove interpolation stability. 
	
	On the other hand, according to Apel's work \cite{Apel2001cr}, it is necessary to select the square of the variable corresponding to the axis of the relatively longer edge as the quadratic term. In the quadrilateral $\tilde{Q}$, it is readily observed that $|\tilde{m}_1\tilde{m}_3| \lesssim |\tilde{m}_2\tilde{m}_4|$. Therefore, the shape function space on $\tilde{Q}$ is expressed as $P_1 \oplus \{l_{13}^2\}$, which is the pullback of $\mathcal{P}_{\hat{Q}}$ to $\tilde{Q}$.
\end{remark}

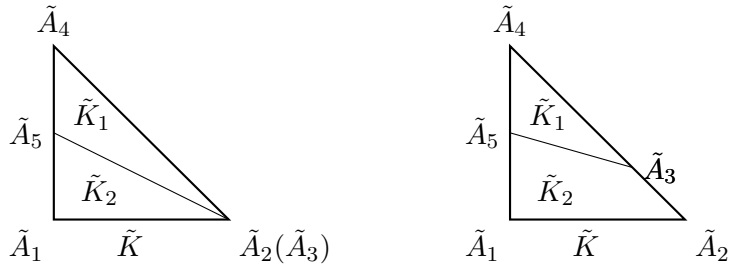
\begin{figure}[ht]
	\centering
	\begin{tikzpicture}[scale=1]
		% —— 左侧：锐角三角形 K ——
			\coordinate (Ah1) at (0,0);
            \coordinate (Ah2) at (2.3,0);
            \coordinate (Ah4) at (0,2.3);
            \draw[thick] (Ah1) -- (Ah2) -- (Ah4) -- cycle;
            \node at (1,-0.3) {$\tilde K$};

           % 分割点
           %\coordinate (Ah3) at ($(Ah2)!0.3!(Ah4)$);
           \coordinate (Ah5) at ($(Ah1)!0.5!(Ah4)$);
           \draw[thin] (Ah2) -- (Ah5);

          % 标注顶点
          \node[below left]  at (Ah1) {$\tilde A_1$};
          \node[below right] at (Ah2) {$\tilde A_2 (\tilde{A}_3)$};
          \node[above]       at (Ah4) {$\tilde A_4$};
          \node[right]       at (Ah3) {$\tilde A_3$};
          \node[left]        at (Ah5) {$\tilde A_5$};

          % （可选）标注子区域 \hat T 和 \hat Q
          \node at (0.5,1.4) {$\tilde K_1$};
          \node at (0.6,0.4) {$\tilde K_2$};
		
		% —— 右侧：直角三角形 \hat K ——
		\begin{scope}[xshift=6cm]			
		    \coordinate (Ah1) at (0,0);
			\coordinate (Ah2) at (2.3,0);
			\coordinate (Ah4) at (0,2.3);
			\draw[thick] (Ah1) -- (Ah2) -- (Ah4) -- cycle;
			\node at (1,-0.3) {$\tilde K$};
			
			% 分割点
			\coordinate (Ah3) at ($(Ah2)!0.3!(Ah4)$);
			\coordinate (Ah5) at ($(Ah1)!0.5!(Ah4)$);
			\draw[thin] (Ah3) -- (Ah5);
			
			% 标注顶点
			\node[below left]  at (Ah1) {$\tilde A_1$};
			\node[below right] at (Ah2) {$\tilde A_2$};
			\node[above]       at (Ah4) {$\tilde A_4$};
			\node[right]       at (Ah3) {$\tilde A_3$};
			\node[left]        at (Ah5) {$\tilde A_5$};
			
			% （可选）标注子区域 \hat T 和 \hat Q
			\node at (0.5,1.4) {$\tilde K_1$};
			\node at (0.6,0.4) {$\tilde K_2$};
		\end{scope}
	\end{tikzpicture}
	\caption{Two type interface macro element}
	\label{fig:macro-element}
\end{figure}
Let $\phi_{\Gamma,e}$ be the basis functions defined which degree of freedom is defined on the discrete interface edge $e\in \mathcal{E}_h^{\Gamma}$, see Figure \ref{fig:macro-element}. The following lemma  gives an upper bound for $\tilde{\phi}_{\Gamma,\tilde{e}}:= \phi_{\Gamma,e}\circ F_{K}$.
\begin{lemma}\label{lem:basis-est}
	Let $M$ be one of the subelement of the interface macro element, it holds that
	\begin{align}
		\|\partial_{\tilde{x}} \tilde{\phi}_{\Gamma,e}\|_{L^2(\tilde{K})} \lesssim |K|^{1/2},\\
		\|\partial_{\tilde{y}} \tilde{\phi}_{\Gamma,e}\|_{L^2(\tilde{K})} \lesssim |K|^{-1/2}.
	\end{align} 
\end{lemma}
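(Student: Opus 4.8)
The macro element $\tilde{K}$ is itself cut by the discrete interface, so the natural strategy is to estimate $\tilde{\phi}_{\Gamma,e}$ subelement by subelement over $\tilde{K}$ and to use, on the quadrilateral subelement, the reference-element estimates already proved. In Case~II the segment $\tilde{A}_3\tilde{A}_5$ splits $\tilde{K}$ into the triangle $\tilde{T}=\tilde{A}_3\tilde{A}_4\tilde{A}_5$ and the quadrilateral $\tilde{Q}$, which share the edge $\tilde{e}=\tilde{A}_3\tilde{A}_5$; by the construction of $U_h$, $\tilde{\phi}_{\Gamma,e}$ equals the $\mathcal{CR}$ shape function of $\tilde{T}$ dual to $\tilde{e}$ on $\tilde{T}$ and the rotated-$Q_1$ shape function of $\tilde{Q}$ dual to $\tilde{e}$ on $\tilde{Q}$, the two matching in mean value on $\tilde{e}$. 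Hence
\[
\|\partial_{\tilde{x}}\tilde{\phi}_{\Gamma,e}\|_{L^2(\tilde{K})}^2
=\|\partial_{\tilde{x}}\tilde{\phi}_{\Gamma,e}\|_{L^2(\tilde{T})}^2+\|\partial_{\tilde{x}}\tilde{\phi}_{\Gamma,e}\|_{L^2(\tilde{Q})}^2
\]
and similarly for $\partial_{\tilde{y}}$, so it suffices to bound the $\tilde{T}$- and $\tilde{Q}$-contributions. (In Case~I the same segment splits $\tilde{K}$ into two triangles, both satisfying the maximum angle condition by \cite{Hu2021optimal}, and only the triangular argument below is needed, applied to each of them.)

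\textbf{Quadrilateral subelement.} Here $\tilde{\phi}_{\Gamma,e}|_{\tilde{Q}}=\hat{\phi}_i\circ F$, where $F:\tilde{Q}\to\hat{Q}$ is the affine map with Jacobian \eqref{eq:jacobian} and $\hat{\phi}_i$ is the reference basis function of the triple \eqref{def:FE} attached to the edge $F(\tilde{e})$. By Lemma~\ref{lem:basis_bound}, $|\hat{\phi}_i|_{H^1(\hat{Q})}\lesssim1$ with a constant independent of the cut ratios $s,t$, and substituting $\hat{v}=\hat{\phi}_i$ into Lemma~\ref{lem:affine-est} yields at once
\[
\|\partial_{\tilde{x}}\tilde{\phi}_{\Gamma,e}\|_{L^2(\tilde{Q})}\lesssim t^{1/2},
\qquad
\|\partial_{\tilde{y}}\tilde{\phi}_{\Gamma,e}\|_{L^2(\tilde{Q})}\lesssim t^{-1/2}.
\]
These are exactly the anisotropy-adapted bounds: the $\tilde{x}$-direction (along the relatively long diagonal $\tilde{m}_2\tilde{m}_4$) is benign while the $\tilde{y}$-direction carries the factor $t^{-1/2}$; the admissibility of the cut (Assumption~3.1 in \cite{Ji2023}) keeps $t$ away from $1$. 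Passing these reference-scale estimates through $\bm{F}_K$ and absorbing the volume factor $|K|\sim|\det\bm{B}|$ gives the $|K|^{\pm1/2}$ form.

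\textbf{Triangular subelement and conclusion.} On $\tilde{T}$ the shape function is $\tilde{\phi}_{\Gamma,e}|_{\tilde{T}}=1-2\lambda_v$, with $v=\tilde{A}_4$ the vertex opposite $\tilde{e}$, so its gradient is the constant vector $-2\tilde{\nabla}\lambda_v$ of magnitude $|\tilde{e}|/(2|\tilde{T}|)$ along the inner normal of $\tilde{e}$. I would compute $\tilde{\nabla}\lambda_v$ in closed form from the vertex coordinates $\tilde{A}_3=(1-s,s)$, $\tilde{A}_4=(0,1)$, $\tilde{A}_5=(0,t)$, bound $|\tilde{T}|$ below and $|\tilde{e}|$ above, note that $|\partial_{\tilde{x}}(\tilde{\phi}_{\Gamma,e}|_{\tilde{T}})|\le|\partial_{\tilde{y}}(\tilde{\phi}_{\Gamma,e}|_{\tilde{T}})|$ because $0\le t-s\le 1-s$, and then multiply each component by $|\tilde{T}|^{1/2}$; using $s\le t$ and the maximum angle condition on $\tilde{T}$ one sees that the $\tilde{T}$-contribution obeys the same split, with the anisotropy again confined to the $\tilde{y}$-direction. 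Adding the $\tilde{T}$- and $\tilde{Q}$-estimates of the two previous steps and tracking the change of variables $\bm{F}_K$ then produces the claimed bounds.

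\textbf{Main obstacle.} The quadrilateral part is essentially packaged in Lemmas~\ref{lem:basis_bound}--\ref{lem:affine-est}; the real work is the triangular part together with the bookkeeping of how the reference-scale anisotropy factor $t^{\pm1/2}$ and the macro-element size $|K|$ combine under $\bm{F}_K$ and under the chosen normalization of $\phi_{\Gamma,e}$. The delicate point is to verify, via the explicit gradient formula, that on the possibly slivered subelement $\tilde{T}$ the large component of the (constant) $\mathcal{CR}$ gradient is the $\tilde{y}$-component, so that $\tilde{T}$ contributes only the harmless scaling to the $\partial_{\tilde{x}}$-estimate; this uses the orientation of $\Gamma_h$ relative to the reference axes encoded in the normalization $s\le t$, and it is the step most sensitive to the geometric admissibility hypotheses on the interface cut.
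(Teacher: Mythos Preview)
Your approach is essentially the one the paper intends: the paper's own proof is truncated (Case~I is dismissed as ``direct calculation'' and Case~II literally stops mid-sentence at ``let $K=\,$''), but the placement of Lemmas~\ref{lem:basis_bound} and~\ref{lem:affine-est} immediately before this lemma, together with the way the result is invoked in Lemma~\ref{le:sup1} to obtain $\|\partial_{\tilde{x}}\tilde{\phi}_6\|_{L^2(\tilde{Q})}\lesssim|\tilde{Q}|^{1/2}$, makes it clear that combining those two lemmas on $\tilde{Q}$ is exactly the intended mechanism. Your quadrilateral argument (Lemma~\ref{lem:basis_bound} for $|\hat{\phi}_i|_{H^1(\hat{Q})}\lesssim1$ fed into Lemma~\ref{lem:affine-est}) reproduces this directly, and your explicit $\mathcal{CR}$ gradient computation on $\tilde{T}$ is the right complement for Case~II; Case~I is indeed a direct calculation, as you note.

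One correction: the step ``passing these reference-scale estimates through $\bm{F}_K$ and absorbing the volume factor $|K|\sim|\det\bm{B}|$'' is spurious. The norms in the lemma are already on the reference macro element $\tilde{K}$, so no pullback through $\bm{F}_K$ is involved; the ``$|K|$'' on the right-hand side is almost certainly a typo for the reference-scale quantity $|\tilde{Q}|\sim t$, which is precisely how the paper uses the lemma in the proof of Lemma~\ref{le:sup1}. Your bounds $t^{\pm1/2}$ are therefore the final answer on $\tilde{Q}$, not an intermediate step. With that adjustment, your proposal fills the gap the paper leaves.
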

\begin{proof}
For case I, the results follows by a direct calculation.
For case II, let $K= $
\end{proof}

\subsection{weak formulations}\label{subsec:WF}
%contiunous and discrete weak formulation
%regularity assumption(proof?)
Let $\boldsymbol{U} = (H^1_0(\Omega))^2$ and $X = L^2_0(\Omega)$. 
The continuous weak formulation of the Stokes interface problem is: 
find $(\boldsymbol{u},p) \in \boldsymbol{U} \times X$ satisfying
\begin{equation}\label{eq:stweak}
	\begin{cases}
		a(\boldsymbol{u}, \boldsymbol{v}) - b(\boldsymbol{v}, p) = (\bm{f},\boldsymbol{v}) & \forall \boldsymbol{v} \in \boldsymbol{U}, \\
		b(\boldsymbol{u}, q) = 0 & \forall q \in X,
	\end{cases}
\end{equation}
where
\begin{align*}
	a(\boldsymbol{u}, \boldsymbol{v}) = \int_{\Omega} \mu \nabla \boldsymbol{u} \cdot \nabla \boldsymbol{v}  dx, \quad
	b(\boldsymbol{v}, p) = \int_{\Omega} (\nabla \cdot \boldsymbol{v}) p  dx.
\end{align*}

The pressure finite element space uses piecewise constants:
\begin{equation}\label{eq:pressure-space}
	X_h = \{ q \in L^2_0(\Omega) \mid q|_K \in \mathcal{P}_0(K) \ \forall K \in \mathcal{T}_h \}.
\end{equation}

Using the velocity space $\boldsymbol{U}_h$ defined in \eqref{def:velocity-space} and pressure space \eqref{eq:pressure-space}, 
the discrete variational formulation is: find $(\boldsymbol{u}_h, p_h) \in \boldsymbol{U}_h \times X_h$ satisfying
\begin{equation}\label{eq:stweak-discrete}
	\begin{cases}
		a_h(\boldsymbol{u}_h, \boldsymbol{v}_h) - b_h(\boldsymbol{v}_h, p_h) = (\bm{f},\boldsymbol{v}_h) & \forall \boldsymbol{v}_h \in \boldsymbol{U}_h, \\
		b_h(\boldsymbol{u}_h, q_h) = 0 & \forall q_h \in X_h,
	\end{cases}
\end{equation}
with discrete forms
\begin{align*}
	a_h(\boldsymbol{u}_h, \boldsymbol{v}_h) = \sum_{K \in \widetilde{\mathcal{T}}_h} \int_K \mu_h \nabla \boldsymbol{u}_h \cdot \nabla \boldsymbol{v}_h  dx,\quad
	b(\boldsymbol{v}_h, p_h) = \sum_{K \in \widetilde{\mathcal{T}}_h}\int_{K} (\nabla \cdot \boldsymbol{v}) p_h  dx.
\end{align*}
where $\mu_h$ is the discrete approximation of the piecewise constant coefficient $\mu$ defined as
\[
\mu_h|_{K} =
\begin{cases}
	\mu_1 & K \in \mathcal{T}_{h,1}, \\
	\mu_2 & K \in \mathcal{T}_{h,2},
\end{cases}
\]

For the error analysis, we make the following regularity assumption:
\begin{assumption}\label{ass:regularity}
	Assume the interface $\Gamma$ is $C^2$-smooth, $\boldsymbol{f} \in L^2(\Omega)^2$, 
	and the solution $(\boldsymbol{u}, p)$ of the Stokes interface problem satisfies:
	\begin{align*}
		\boldsymbol{u} \in \boldsymbol{\tilde{H}}^2(\Omega_1 \cup \Omega_2), \quad	p \in H^1(\Omega_1 \cup \Omega_2),
	\end{align*}
	with the regularity estimate:
	\begin{equation}
		\|\boldsymbol{u}\|_{H^2(\Omega_1 \cup \Omega_2)} + \|p\|_{H^1(\Omega_1 \cup \Omega_2)} \lesssim \|\boldsymbol{f}\|_{L^2(\Omega)}.
	\end{equation}
\end{assumption}
 
\subsection{inf-sup condition}\label{subsec:inf-sup}
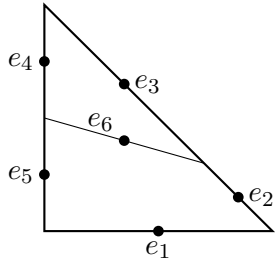
\begin{figure}[ht]
	\centering
	\begin{tikzpicture}[scale=1]
		% —— 直角三角形 \hat{K} ——
		\coordinate (Ah1) at (0,0);
		\coordinate (Ah2) at (3,0);
		\coordinate (Ah4) at (0,3);
		\draw[thick] (Ah1) -- (Ah2) -- (Ah4) -- cycle;
		
		% —— 分割线 ——
		\coordinate (Ah3) at ($(Ah2)!0.3!(Ah4)$);
		\coordinate (Ah5) at ($(Ah1)!0.5!(Ah4)$);
		\draw[thin] (Ah3) -- (Ah5);
		
		% —— 中点 ——
		\coordinate (Mh1) at ($(Ah1)!0.5!(Ah2)$);
		\coordinate (Mh2) at ($(Ah2)!0.5!(Ah3)$);
		\coordinate (Mh3) at ($(Ah3)!0.5!(Ah4)$);
		\coordinate (Mh4) at ($(Ah4)!0.5!(Ah5)$);
		\coordinate (Mh5) at ($(Ah5)!0.5!(Ah1)$);
		\coordinate (Mh6) at ($(Ah3)!0.5!(Ah5)$);
		
		% —— 绘制中点并凸显 ——
		% 实心小点
		\foreach \M in {Mh1,Mh2,Mh3,Mh4,Mh5,Mh6} {\fill (\M) circle (2pt);}
		% 空心小点
		%	\draw (Mh6) circle (2pt);
		
		% —— 标注中点 ——
		\node[below]      at (Mh1) {$ e_1$};
		\node[right]      at (Mh2) {$ e_2$};
		\node[right]      at (Mh3) {$ e_3$};
		\node[left]       at (Mh4) {$ e_4$};
		\node[left]       at (Mh5) {$ e_5$};
		\node[above left] at (Mh6) {$ e_6$};
		
		% （可选）标注子区域 \hat T 和 \hat Q
		%		\node at (0.5,2.0) {$\hat T$};
		%		\node at (0.6,0.7) {$\hat Q$};
	\end{tikzpicture}
	\caption{Degrees of freedom}
	\label{fig:degree}
\end{figure}
In this chapter, we discuss the stability of $\boldsymbol{U}_h\times X_h,$ and we use the space decomposition technique to show that $\boldsymbol{U}_h\times X_h$ satisfies the discrete inf-sup condition. 
%因为标准的CR-P0元是满足inf-sup条件的，所以我们这里只需要考虑界面单元。假设M是一个界面宏单元，我们做如下空间分解。
Let $N_I$ be the number of interface elements in $\mathcal{T}_h^{\Gamma}.$ For $K\in \mathcal{T}_h^{\Gamma}$, the interface element is seprated into a triangle $T$ and a quadrilateral $Q$, we define a function $q_{h,i}\in X_h$ by
\begin{flalign*}
	&&
	\begin{aligned}
		q_{h,i}
		= \begin{cases}
			|T|^{-1} \quad ~~~~ &\mathrm{in}\, T,\\
			-|Q|^{-1} \quad  &\mathrm{in} \,Q,\\
			0 \quad &\mathrm{in} \,\Omega\!\setminus\! M.
		\end{cases}
	\end{aligned}
	&&
\end{flalign*}
For the case that the interface element is divided into two triangles, the definition of $q_{h,i}$ is similar. We decompose the $X_h$ space as
\begin{equation}
X_h=X_{0,h}\oplus X^\bot _{0,h},
\end{equation}
where $X_{0,h}= \mathrm{span}\{q_{h,i}\}^{N_I}_{i=1}$. Through a simple calculation, we have
\begin{equation}
	X^{\perp}_{0,h}=\{q_h\in X|\,q_h|_K\in P_0,\,\forall K\in \widetilde{\mathcal{T}}_h\}
\end{equation}
  Let $\{\phi_i\}^6_{i=1}$ be a set of basis functions on each edge $\{e_i\}^6_{i=1}$ of local finite element space $U_h(M)$ on interface element $M$ in 
\Cref{fig:degree}, satisfying
$$\frac{1}{|e_i|}\int_{e_i}\phi_j\,ds =\delta_{i,j}.$$
Let $U_{0,h}=\mathrm{span}\{\phi_{\Gamma,e}\}^{N_I}_{j=1},$ $\boldsymbol{U}_{0,h}=U_{0,h}\times U_{0,h}.$ Through simple calculations, we know that $\boldsymbol{U}_{0,h}\subset \boldsymbol{U}_h.$ First prove that $\boldsymbol{U}_{0,h}\times X_{0,h}$ satisfies the inf-sup condition.

Before proceeding, we introduce a lemma. While the standard result holds globally on $\Omega$ with a domain-dependent constant, the following lemma establishes a local version on a single element where the constant $C$ is independent of the element size.
\begin{lemma}\label{le:div}
	For interface elements $M \in \mathcal{T}_h^{\Gamma}$, there exists a positive constant $C$ independent of $h$, such that for all $p \in L^2_0(M)$, there exists $\bm{v} \in (H^1_0(M))^2$ satisfying:
	\begin{align}
		\nabla \cdot \bm{v} &= p \quad \mathrm{in}\, M, \\
		|\bm{v}|_{H^1(M)} &\leq C \|p\|_{L^2(M)},
	\end{align}
	where $C$ depends only on the maximum angle condition of the triangulation.
\end{lemma}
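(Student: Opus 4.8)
The plan is to deduce the bound from the classical surjectivity of the divergence operator on one fixed reference element, transported to $M$ by the affine equivalence, tracking every constant so that the final estimate sees only the shape of the background element and is blind both to $h$ and to the position of $\Gamma_h$ inside $M$. I would first recall the classical theorem on the bounded right inverse of the divergence (Bogovskii; see also the monograph of Girault and Raviart): on any bounded connected Lipschitz domain there is a bounded linear operator from the zero-mean subspace of $L^2$ into $(H^1_0)^2$ that inverts $\nabla\cdot$. Fix a reference triangle $\hat M$ and let $\hat C$ be its constant, so that every $\hat p\in L^2_0(\hat M)$ admits $\hat{\bm v}\in(H^1_0(\hat M))^2$ with $\nabla\cdot\hat{\bm v}=\hat p$ and $|\hat{\bm v}|_{H^1(\hat M)}\le\hat C\,\|\hat p\|_{L^2(\hat M)}$.

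Next I would transport this estimate to $M$. Since $M\in\mathcal T_h^{\Gamma}$ is an entire element of the shape-regular background mesh, the affine map $F_M:\hat M\to M$, $F_M(\hat{\bm x})=\bm B\hat{\bm x}+\bm b$, satisfies $\|\bm B\|\lesssim h_M$ and $\|\bm B^{-1}\|\lesssim h_M^{-1}$, hence $\kappa(\bm B):=\|\bm B\|\,\|\bm B^{-1}\|\lesssim1$ with a constant governed only by the maximum angle condition on $\mathcal T_h$. Given $p\in L^2_0(M)$, put $\hat p:=(\det\bm B)\,p\circ F_M$, which has zero mean on $\hat M$ and satisfies $\|\hat p\|_{L^2(\hat M)}^2=|\det\bm B|\,\|p\|_{L^2(M)}^2$. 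Let $\hat{\bm v}$ be produced by the reference estimate and define $\bm v$ through the contravariant Piola transform $\bm v:=\tfrac1{\det\bm B}\,\bm B\,(\hat{\bm v}\circ F_M^{-1})$. This map preserves vanishing boundary traces, so $\bm v\in(H^1_0(M))^2$, and it satisfies $\nabla\cdot\bm v=(\det\bm B)^{-1}(\nabla\cdot\hat{\bm v})\circ F_M^{-1}=p$ in $M$. Using $\nabla\bm v=(\det\bm B)^{-1}\bm B\,(\nabla\hat{\bm v}\circ F_M^{-1})\,\bm B^{-1}$ and a change of variables, $|\bm v|_{H^1(M)}^2\le\tfrac{\|\bm B\|^2\|\bm B^{-1}\|^2}{|\det\bm B|}\,|\hat{\bm v}|_{H^1(\hat M)}^2\le\tfrac{\|\bm B\|^2\|\bm B^{-1}\|^2}{|\det\bm B|}\,\hat C^2\,\|\hat p\|_{L^2(\hat M)}^2=\kappa(\bm B)^2\,\hat C^2\,\|p\|_{L^2(M)}^2$, so the lemma follows with $C=\kappa(\bm B)\,\hat C$, which depends only on the regularity of $\mathcal T_h$.

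There is no deep obstacle here; the one computation to get right is the bookkeeping of the powers of $\det\bm B$ — the factor $|\det\bm B|$ from the pull-back of $\|\hat p\|_{L^2(\hat M)}^2$ must cancel exactly the $|\det\bm B|^{-1}$ from the $H^1$-seminorm of the Piola-transported field, which is precisely what the contravariant Piola transform is designed to achieve and is why $C$ comes out scale invariant. (Alternatively one can argue directly from the divergence inequality on convex domains, whose constant is governed by the diameter-to-inradius ratio, bounded for shape-regular $M$.) The conceptual point worth stressing in the write-up is that the inequality is asserted on the whole macro-element $M$ rather than on the sub-triangle $T$ or the sub-quadrilateral $Q$ produced by the cut: on those thin pieces the divergence constant genuinely degenerates, so it is essential that this local divergence right inverse is constructed on the shape-regular $M$ and only then fed into the subsequent space-decomposition proof of the inf-sup condition.
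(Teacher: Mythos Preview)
Your proof is correct and follows essentially the same route as the paper: pull back to a fixed reference triangle, invoke the classical bounded right inverse of the divergence there, and push forward by an affine/Piola map, tracking $\|\bm B\|$, $\|\bm B^{-1}\|$, and $|\det\bm B|$. The only cosmetic difference is that you absorb a factor of $\det\bm B$ into $\hat p$ and use the full contravariant Piola transform, which makes the cancellation $|\det\bm B|/|\det\bm B|=1$ explicit and yields $C=\kappa(\bm B)\hat C$ directly, whereas the paper sets $\hat p=p\circ F_M$ and $\bm v=\bm B(\hat{\bm v}\circ F_M^{-1})$ and then balances $h_M^2$ against $h_M^{-2}$ in the final chain of inequalities.
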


\begin{proof}
	Let $\hat{M}$ be the reference triangle element and $F_M: \hat{M} \to M$ the affine mapping:
	\begin{equation*}
		\bm{x} = \bm{B}\hat{\bm{x}} + \bm{b}. 
	\end{equation*}
	For $p \in L^2_0(M)$, define the scaled function on the reference element $\hat{p} = p\circ F_M$.	Note that $\hat{p} \in L^2_0(\hat{M})$ since $\int_{\hat{M}} \hat{p}  d\hat{\bm{x}} = |\det \bm{B}|\int_M p  d\bm{x} = 0$. It is well known that there exists $\hat{\bm{v}} \in (H^1_0(\hat{M}))^2$ such that:
	\begin{align}
		\hat{\nabla} \cdot \hat{\bm{v}} &= \hat{p} \quad \text{in } \hat{M}, \label{eq:ref-div} \\
		\|\hat{\bm{v}}\|_{H^1(\hat{M})} &\leq C_{\hat{M}} \|\hat{p}\|_{L^2(\hat{M})}, \label{eq:ref-stab}
	\end{align}
	where $C_{\hat{M}} = O(1)$ since $\hat{M}$ is a reference triangle. Using the Piola transformation, we define
	\begin{equation}
		\bm{v}(\bm{x}) = \bm{B} (\hat{\bm{v}}\circ F_M^{-1}).
	\end{equation}
Therefore, we derive
	\begin{align*}
		\nabla \cdot \bm{v}
		&= (\hat{\nabla} \cdot \hat{\bm{v}})\circ F_M^{-1} \\
		&= \hat{p}\circ F_M^{-1} \\
		&= p.
	\end{align*}
Since $\hat{\bm{v}} = \bm{0}$ on $\partial\hat{M}$, we have $\bm{v} = \bm{0}$ on $\partial M$. The gradient transforms as:
	\begin{equation*}
		\nabla \bm{v} =  \bm{B} \bm{B}^{-\top}(\hat{\nabla} \hat{\bm{v}}) .
	\end{equation*}
	Thus,
	\begin{align*}
		|\bm{v}|_{H^1(M)}^2 
		&= \int_M |\nabla \bm{v}|^2 d\bm{x} \\
		&\lesssim \|\bm{B}\|^2 \|\bm{B}^{-1}\|^2 |\det\bm{B}|\int_{\hat{M}} |\hat{\nabla} \hat{\bm{v}}|^2 d\hat{\bm{x}} \\
		&\lesssim h_M^2\|\hat{\bm{v}}\|_{H^1(\hat{M})}^2\\
		&\lesssim h_M^2\|\hat{p}\|_{L^2(\hat{M})}^2\\
		&\lesssim \|p\|_{L^2(M)}^2.
	\end{align*}
The proof completes.
\end{proof}

%----------------
The following lemma shows that the finite element pair $\bm{U}_{0,h}\times X_{0,h}$ satisfies the inf-sup condition.
\begin{lemma}\label{le:sup1}
	There exists a constant $k_1 > 0$, independent of $h$, such that for any $q_{0,h} \in X_{0,h}$, it holds
	\begin{equation}
		k_1\| q_{0,h} \|_{L^2(\Omega)} \leq \sup_{\substack{\boldsymbol{v}_{1,h} \in \boldsymbol{U}_{0,h} \\ \boldsymbol{v}_{1,h} \neq \bm{0}}} \frac{b_h(\boldsymbol{v}_{1,h}, q_{0,h})}{\|\boldsymbol{v}_{1,h}\|_{\boldsymbol{U}_{h}}}.
	\end{equation}
\end{lemma}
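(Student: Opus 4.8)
The plan is to prove the inf-sup condition for the pair $\boldsymbol{U}_{0,h}\times X_{0,h}$ by exploiting the very special structure of these spaces: both are built element-by-element on the interface macro-elements, with no coupling between distinct interface elements. Specifically, given $q_{0,h}\in X_{0,h}$ we write $q_{0,h}=\sum_{i=1}^{N_I}\beta_i q_{h,i}$, and for each interface macro-element $M$ we will construct a velocity test function $\boldsymbol{v}_{1,h}|_M$ (supported on $M$) from the edge basis functions $\phi_{\Gamma,e}$ on the single interface edge $e=M\cap\Gamma_h$, chosen so that $\int_M(\nabla\cdot\boldsymbol{v}_{1,h})q_{0,h}=b_{h,M}(\boldsymbol{v}_{1,h},q_{0,h})$ reproduces $|\beta_i|$ times a fixed positive quantity, while $\|\boldsymbol{v}_{1,h}\|_{\boldsymbol{U}_h,M}$ is controlled by that same quantity. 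Summing over $i$ and using that both the velocity support and the pressure support are disjoint across the $N_I$ interface elements, the cross terms vanish and the global estimate follows from the local ones.

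The key computational step is the following local identity. On a macro-element $M$ split into $T$ and $Q$ (Case II; Case I with two triangles is analogous), take $\boldsymbol{v}_{1,h}=c\,\phi_{\Gamma,e}\,\boldsymbol{n}_e$ where $\boldsymbol{n}_e$ is the unit normal to the interface edge $e$ and $c$ a scalar to be fixed. Since $\phi_{\Gamma,e}$ is the nonconforming basis function whose only nonzero edge-average degree of freedom sits on $e$, integration by parts on $T$ and on $Q$ separately gives
\begin{equation*}
\int_M (\nabla\cdot\boldsymbol{v}_{1,h})\,q_{h,i}\,dx
= \Big(|T|^{-1}\!\!\int_{\partial T}\! - |Q|^{-1}\!\!\int_{\partial Q}\Big) (\boldsymbol{v}_{1,h}\cdot\boldsymbol{n})\,ds
= c\,|e|\,(|T|^{-1}+|Q|^{-1}),
\end{equation*}
where the boundary integrals over all edges other than $e$ vanish because the corresponding edge-average of $\phi_{\Gamma,e}$ is zero, and the two contributions from $e$ (with opposite outward normals from $T$ and $Q$, and opposite signs of $q_{h,i}$) add constructively. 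Choosing $c$ appropriately and using $\|q_{h,i}\|_{L^2(M)}^2=|T|^{-1}+|Q|^{-1}$, we get $b_{h,M}(\boldsymbol{v}_{1,h},q_{h,i})\gtrsim \|q_{h,i}\|_{L^2(M)}$, i.e. $\gtrsim\|q_{h,i}\|_{L^2(M)}^2$ once $\boldsymbol{v}_{1,h}$ is scaled so that $\|\boldsymbol{v}_{1,h}\|_{\boldsymbol{U}_h,M}\lesssim \|q_{h,i}\|_{L^2(M)}$.

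That last scaling statement is where Lemma~\ref{lem:basis-est} is essential and where the main difficulty lies. The macro-element $M$ may be arbitrarily anisotropic and degenerate, so a naive bound $|\phi_{\Gamma,e}|_{H^1(M)}$ blows up; instead, the anisotropy-adaptive bounds $\|\partial_{\tilde x}\tilde\phi_{\Gamma,e}\|_{L^2(\tilde K)}\lesssim |K|^{1/2}$ and $\|\partial_{\tilde y}\tilde\phi_{\Gamma,e}\|_{L^2(\tilde K)}\lesssim |K|^{-1/2}$, combined with the fact that $|e|\lesssim h_K$ and $|T|^{-1}+|Q|^{-1}\gtrsim h_K^{-2}$ up to the maximum-angle constant, must be played off against each other so that the $|K|^{-1/2}$ blow-up in the gradient is exactly compensated by the factor $\|q_{h,i}\|_{L^2(M)}\gtrsim h_K^{-1}$ coming from the pressure normalization. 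I expect the delicate bookkeeping to be in tracking these powers of $|K|$, $h_K$, and the cut ratios $s,t$ uniformly, and in verifying that the normal component of the edge basis function indeed picks up the full $|e|$ on the interface edge regardless of which of the two subelement shapes ($T$ or $Q$) abuts it. Once the local estimate $b_{h,M}(\boldsymbol{v}_{1,h},q_{0,h})\gtrsim \|q_{0,h}\|_{L^2(M)}^2$ with $\|\boldsymbol{v}_{1,h}\|_{\boldsymbol{U}_h,M}\lesssim\|q_{0,h}\|_{L^2(M)}$ is in hand, assembling $\boldsymbol{v}_{1,h}=\sum_i\boldsymbol{v}_{1,h}|_{M_i}\in\boldsymbol{U}_{0,h}$, using $\|\boldsymbol{v}_{1,h}\|_{\boldsymbol{U}_h}^2=\sum_i\|\boldsymbol{v}_{1,h}\|_{\boldsymbol{U}_h,M_i}^2$ and $\|q_{0,h}\|_{L^2(\Omega)}^2=\sum_i\|q_{0,h}\|_{L^2(M_i)}^2$, and a Cauchy--Schwarz argument yields $b_h(\boldsymbol{v}_{1,h},q_{0,h})\geq k_1\|q_{0,h}\|_{L^2(\Omega)}\|\boldsymbol{v}_{1,h}\|_{\boldsymbol{U}_h}$, which is the claimed bound.
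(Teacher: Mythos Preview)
Your approach is correct and reaches the same conclusion, but by a genuinely different route than the paper. The paper proves this lemma via a Fortin-operator argument: for each macro-element $M$ it first invokes Lemma~\ref{le:div} to produce a continuous $\boldsymbol{v}_1\in \boldsymbol{H}^1_0(M)$ with $\nabla\cdot\boldsymbol{v}_1=q_{0,h}$ and $|\boldsymbol{v}_1|_{H^1(M)}\lesssim\|q_{0,h}\|_{L^2(M)}$, then interpolates it by $\Pi^{(1)}_M$ into $\boldsymbol{U}_{0,h}(M)$ and proves the stability $|\Pi^{(1)}_M\boldsymbol{v}_1|_{H^1(T\cup Q)}\lesssim|\boldsymbol{v}_1|_{H^1(M)}$. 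The stability proof splits $T$ from $Q$ and, on $Q$, splits $\partial_{\tilde y}$ (constant, so handled by Green's formula as on the triangle) from $\partial_{\tilde x}$ (handled via the bound $\|\partial_{\tilde x}\tilde\phi_6\|_{L^2(\tilde Q)}\lesssim|\tilde Q|^{1/2}$ together with a trace/Poincar\'e estimate on $\delta\boldsymbol{\tilde v}_1$).

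You instead write down the test function explicitly as $c\,\phi_{\Gamma,e}\boldsymbol{n}_e$, compute $b_{h,M}$ exactly, and reduce everything to the geometric inequality $|\phi_{\Gamma,e}|_{H^1(T\cup Q)}\lesssim |e|\,(|T|^{-1}+|Q|^{-1})^{1/2}$. This is more elementary and entirely bypasses Lemma~\ref{le:div} and the Fortin machinery. The triangle contribution satisfies your inequality with constant~$1$ (since $|\phi_{\Gamma,e}|_{H^1(T)}=|e|/|T|^{1/2}$ for the $\mathcal{CR}$ basis), and on $\tilde Q$ the bound $|\tilde\phi_6|_{H^1(\tilde Q)}\lesssim t^{-1/2}$ from Lemmas~\ref{lem:affine-est}--\ref{lem:basis_bound} can be matched against the right-hand side by a short case split ($t\le 1/2$: use $|\tilde e_6|\ge 1-t\ge\frac12$ and $|\tilde Q|^{-1}\ge t^{-1}$; $t>1/2$: use $|\tilde e_6|^2/|\tilde T|\ge 2(1-s)/(1-t)\ge 2$). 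Your heuristic ``$|e|\lesssim h_K$, $|T|^{-1}+|Q|^{-1}\gtrsim h_K^{-2}$'' is too coarse to capture this---the balancing has to be done in the cut ratios $s,t$, not in $h_K$---but once that case analysis is written out, your argument closes. The paper's Fortin route trades this explicit geometric inequality for an interpolation-stability argument that leans on the special linearity of $\mathcal{P}_{\hat Q}$ in $\hat y$; your direct route is shorter but requires verifying the inequality by hand.
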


\begin{proof}
	For any $q_{0,h} \in X_{0,h}$ and any macro-element $M \in \mathcal{T}_h^{\Gamma}$, since $\int_M q_{0,h}  dx = 0$, Lemma \ref{le:div} guarantees the existence of $\boldsymbol{v}_1 \in \bm{H}^1_0(M)$ satisfying:
	\begin{align*}
		\nabla \cdot \boldsymbol{v}_1 &= q_{0,h}\quad \text{in} ~M, \\
		|\boldsymbol{v}_1|_{H^1(M)} &\lesssim \|q_{0,h}\|_{L^2(M)}.
	\end{align*}
	
	Define the interpolation operator $\Pi^{(1)}_M: \bm{H}^1(M) \rightarrow \bm{U}_{h}(M)$ by
	\begin{equation}\label{eq:pi_m^1}
		\int_e \Pi^{(1)}_M \boldsymbol{v}  ds = \int_e \boldsymbol{v}  ds, \quad \forall e \in \mathcal{E}(M).
	\end{equation}
	This implies the representation:
	\[
	\Pi^{(1)}_M \boldsymbol{v} = \sum_{i=1}^{6} \left( \frac{1}{|e_i|} \int_{e_i} \boldsymbol{v}  ds \right) \boldsymbol{\phi}_i.
	\]
	For $\boldsymbol{v}_1 \in \bm{H}^1_0(M)$, we have specifically:
	\[
	\Pi^{(1)}_M \boldsymbol{v}_1 = \left( \frac{1}{|e_6|} \int_{e_6} \boldsymbol{v}_1  ds \right) \boldsymbol{\phi}_6.
	\]
	
	We first establish the stability on the triangular sub-element $T$:
	Since $\Pi^{(1)}_M \boldsymbol{v}|_T \in \bm{P}_1(T)$, its gradient is constant. By Green's formula:
	\[
	\int_T \partial_x (\Pi^{(1)}_M \boldsymbol{v})  dx = \int_T \partial_x \boldsymbol{v}  dx.
	\]
	Thus,
	\[
	\partial_x (\Pi^{(1)}_M \boldsymbol{v})|_T = \frac{1}{|T|} \int_T \partial_x \boldsymbol{v}  dx.
	\]
	The $L^2$-norm satisfies:
	\begin{align*}
		\|\partial_x (\Pi^{(1)}_M \boldsymbol{v})\|_{L^2(T)} 
		&= |T|^{1/2} \left| \frac{1}{|T|} \int_T \partial_x \boldsymbol{v}  dx \right| \\
		&\leq |T|^{-1/2} \left| \int_T \partial_x \boldsymbol{v}  dx \right| \\
		&\lesssim \|\partial_x \boldsymbol{v}\|_{L^2(T)}.
	\end{align*}
	Similarly, $\|\partial_y (\Pi^{(1)}_M \boldsymbol{v})\|_{L^2(T)} \lesssim \|\partial_y \boldsymbol{v}\|_{L^2(T)}$. Therefore,
	\begin{equation}\label{eq:pi_m^1err}
		|\Pi^{(1)}_M \boldsymbol{v}|_{H^1(T)} \lesssim |\boldsymbol{v}|_{H^1(T)}.
	\end{equation}
	
	For the quadrilateral sub-element $Q$, using scaling argument, we have
	\begin{equation*}
		|\Pi^{(1)}_M \boldsymbol{v}|_{H^1(Q)} \lesssim |\tilde{\Pi}^{(1)}_{\tilde{M}} \boldsymbol{\tilde{v}}|_{H^1(\tilde{Q})}.
	\end{equation*}
    and 
    \[
    \tilde{\Pi}^{(1)}_{\tilde{M}} \boldsymbol{\tilde{v}}_1 = \left( \frac{1}{|\tilde{e}_6|} \int_{\tilde{e}_6} \boldsymbol{\tilde{v}}_1  ds \right) \boldsymbol{\tilde{\phi}}_6.
    \]
    since $\partial_{\tilde{y}} (\tilde{\Pi}^{(1)}_{\tilde{M}} \boldsymbol{\tilde{v}}_1)$ is constant in $\tilde{Q}$, using the same argument as for the triangle subelement, we have 
    \[
    \|\partial_{\tilde{y}} (\tilde{\Pi}^{(1)}_{\tilde{M}} \boldsymbol{\tilde{v}}_1)\|_{L^2(Q)} \lesssim \|\partial_{\tilde{y}} \boldsymbol{\tilde{v}}_1\|_{L^2(\tilde{Q})}.
    \]
    
    Using lemma \ref{lem:basis-est}, we have
    \begin{align*}
    	|\partial_{\tilde{x}}\tilde{\phi}_6|_{H^1(\tilde{Q})}\lesssim |\tilde{Q}|^{1/2}.
    \end{align*}
    consequently,  let $\delta \boldsymbol{\tilde{v}}_1 = \boldsymbol{\tilde{v}}_1 - \frac{1}{|\tilde{Q}|}\int_{\tilde{Q}}\boldsymbol{\tilde{v}}_1 d\tilde{x}$, we derive
	\begin{align*}
		\|\partial_{\tilde{x}} (\Pi^{(1)}_{\tilde{M}} \boldsymbol{\tilde{v}}_1)\|_{L^2(\tilde{Q})} 
		&=\|\partial_{\tilde{x}} (\Pi^{(1)}_{\tilde{M}} \delta\boldsymbol{\tilde{v}}_1)\|_{L^2(\tilde{Q})} \\
		&= \left| \frac{1}{|\tilde{e}_6|} \int_{\tilde{e}_6} \delta\boldsymbol{\tilde{v}}_1  ds \right| \|\partial_{\tilde{x}} \boldsymbol{\tilde{\phi}}_6\|_{L^2(\tilde{Q})} \\
		&\lesssim \frac{|\tilde{Q}|^{1/2}}{|\tilde{e}_6|^{1/2}}\|\delta\boldsymbol{\tilde{v}}_1\|_{L^2(\tilde{e}_6)}\\
		&\lesssim \frac{|\tilde{Q}|^{1/2}}{|\tilde{e}_6|^{1/2}} \frac{|\tilde{e}_6|^{1/2}}{|\tilde{Q}|^{1/2}}(\|\delta\boldsymbol{\tilde{v}}_1\|_{L^2(\tilde{Q})} + h_{\tilde{Q}}|\delta\boldsymbol{\tilde{v}}_1|_{H^1(\tilde{Q})} )\\
		&\lesssim |\boldsymbol{\tilde{v}}_1|_{H^1(\tilde{Q})}.
	\end{align*}
	Therefore,
	\begin{equation}\label{eq:pi_m^1Q}
		|\tilde{\Pi}^{(1)}_{\tilde{M}} \boldsymbol{\tilde{v}}_1|_{H^1(\tilde{Q})} \lesssim |\boldsymbol{\tilde{v}}_1|_{H^1(\tilde{Q})}.
	\end{equation}
	and furthermore
	\begin{equation*}
		|\Pi^{(1)}_M \boldsymbol{v}|_{H^1(Q)} \lesssim |\tilde{\Pi}^{(1)}_{\tilde{M}} \boldsymbol{\tilde{v}}|_{H^1(\tilde{Q})}
		\lesssim |\boldsymbol{\tilde{v}}_1|_{H^1(\tilde{Q})}\lesssim |\bm{v}_1|_{H^1(Q)}.
	\end{equation*}
	Combining \eqref{eq:pi_m^1err} and \eqref{eq:pi_m^1Q}:
	\begin{equation}\label{eq:pi_m^1QT}
		|\Pi^{(1)}_M \boldsymbol{v}_1|_{H^1(T \cup Q)} \lesssim |\boldsymbol{v}_1|_{H^1(M)}.
	\end{equation}
	The proof completes by Fortin trick.
\end{proof}

Next, we establish the inf-sup condition for $\boldsymbol{U}_h \times X^{\perp}_{0,h}$.
\begin{lemma}\label{le:sup2}
	There exists a constant $k_2 > 0$, independent of $h$, such that for any $q^{\bot}_{0,h} \in X^{\bot}_{0,h}$, it holds
	\begin{equation}
		k_2\| q_{0,h}^{\perp} \|_{L^2(\Omega)} \leq \sup_{\substack{\boldsymbol{v}_{2,h} \in \boldsymbol{U}_{h} \\ \boldsymbol{v}_{2,h} \neq \bm{0}}} \frac{b_h(\boldsymbol{v}_{2,h}, q_{0,h}^{\perp})}{\|\boldsymbol{v}_{2,h}\|_{\boldsymbol{U}_{h}}}.
	\end{equation}
\end{lemma}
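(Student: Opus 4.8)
The plan is to prove the estimate by a Fortin argument, reducing it to the classical (global) inf-sup condition on the fixed polygon $\Omega$ together with an $h$-uniform $H^1$-stability bound for the canonical interpolation operator into $\boldsymbol U_h$. Since $q^\perp_{0,h}\in X^\perp_{0,h}\subset L^2_0(\Omega)$, the standard global result on $\Omega$ supplies $\boldsymbol v\in(H^1_0(\Omega))^2$ with $\nabla\cdot\boldsymbol v=q^\perp_{0,h}$ and $|\boldsymbol v|_{H^1(\Omega)}\le C_\Omega\|q^\perp_{0,h}\|_{L^2(\Omega)}$, $C_\Omega$ depending only on $\Omega$. I would then use the key structural fact that $X^\perp_{0,h}$ consists exactly of the functions of $X_h$ that take a single constant value on each macro-element $M\in\mathcal T_h^\Gamma$; equivalently, $q^\perp_{0,h}$ is piecewise constant over the shape-regular background mesh $\mathcal T_h$. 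Define the Fortin operator $\Pi_h\colon(H^1_0(\Omega))^2\to\boldsymbol U_h$ componentwise by matching all edge moments, $\int_e\Pi_h\boldsymbol w\,\mathrm ds=\int_e\boldsymbol w\,\mathrm ds$ for every $e\in\widetilde{\mathcal E}_h$. This is well defined on $H^1$ by the trace theorem, it lands in $\boldsymbol U_h$ by \eqref{def:velocity-space} (matching moments on interior edges gives $\int_e[\Pi_h\boldsymbol w]=0$, and $\boldsymbol w=\bm 0$ on $\partial\Omega$ gives the boundary constraint), and elementwise it is the ordinary Crouzeix--Raviart interpolant on non-interface triangles, the $P_1$ interpolant from the three edge moments of a triangular sub-element, and the $\mathcal P(Q)$ interpolant from the four edge moments of a quadrilateral sub-element.

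The commuting property comes next: since $q^\perp_{0,h}$ is constant, say $c_{\mathcal K}$, on each background element $\mathcal K\in\mathcal T_h$, applying the divergence theorem on each sub-element, cancelling the interior cut-edge term by the weak continuity $\int_e[\Pi_h\boldsymbol v]=0$ built into $\boldsymbol U_h$, and using moment-matching on the (sub-)edges of $\partial\mathcal K$ yields
\begin{align*}
	\sum_{K\subset\mathcal K}\int_K(\nabla\cdot\Pi_h\boldsymbol v)\,q^\perp_{0,h}\,\mathrm dx
	&=c_{\mathcal K}\int_{\partial\mathcal K}\Pi_h\boldsymbol v\cdot\boldsymbol n\,\mathrm ds
	=c_{\mathcal K}\int_{\partial\mathcal K}\boldsymbol v\cdot\boldsymbol n\,\mathrm ds\\
	&=\int_{\mathcal K}(\nabla\cdot\boldsymbol v)\,q^\perp_{0,h}\,\mathrm dx,
\end{align*}
and summing over $\mathcal K$ gives $b_h(\Pi_h\boldsymbol v,q^\perp_{0,h})=b(\boldsymbol v,q^\perp_{0,h})=\|q^\perp_{0,h}\|_{L^2(\Omega)}^2$. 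Once the stability bound $\|\Pi_h\boldsymbol v\|_{\boldsymbol U_h}\le C_F|\boldsymbol v|_{H^1(\Omega)}$ with $C_F$ independent of $h$ is available, the lemma follows immediately:
\begin{align*}
	\sup_{\boldsymbol v_{2,h}\in\boldsymbol U_h\setminus\{\bm 0\}}\frac{b_h(\boldsymbol v_{2,h},q^\perp_{0,h})}{\|\boldsymbol v_{2,h}\|_{\boldsymbol U_h}}
	&\ge\frac{b_h(\Pi_h\boldsymbol v,q^\perp_{0,h})}{\|\Pi_h\boldsymbol v\|_{\boldsymbol U_h}}
	=\frac{\|q^\perp_{0,h}\|_{L^2(\Omega)}^2}{\|\Pi_h\boldsymbol v\|_{\boldsymbol U_h}}\\
	&\ge\frac{\|q^\perp_{0,h}\|_{L^2(\Omega)}^2}{C_F|\boldsymbol v|_{H^1(\Omega)}}
	\ge\frac{1}{C_FC_\Omega}\|q^\perp_{0,h}\|_{L^2(\Omega)},
\end{align*}
so $k_2=(C_FC_\Omega)^{-1}$ works.

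It therefore remains to prove the elementwise stability $|\Pi_h\boldsymbol v|_{H^1(K)}\lesssim|\boldsymbol v|_{H^1(K)}$ with constant independent of $h$ and of the local mesh geometry; summing over $\widetilde{\mathcal T}_h$ and recalling \eqref{eq:energy-norm} then gives $C_F$. On a non-interface triangle, and on the triangular piece $T$ of a macro-element, this is immediate: $\nabla\Pi_h\boldsymbol v$ is constant there, so $\partial_x(\Pi_h\boldsymbol v)|_T=|T|^{-1}\int_T\partial_x\boldsymbol v\,\mathrm dx$ by the divergence theorem and moment-matching, whence $\|\partial_x(\Pi_h\boldsymbol v)\|_{L^2(T)}\le\|\partial_x\boldsymbol v\|_{L^2(T)}$ by Cauchy--Schwarz (and likewise for $\partial_y$), with no appeal to shape regularity. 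The delicate case is the anisotropic quadrilateral $Q$. Pulling back along the affine maps $Q\to\tilde Q\to\hat Q$, the edge-moment degrees of freedom commute with affine changes of variables, so the interpolant on $Q$ corresponds to $\hat\Pi\hat{\boldsymbol v}$, the interpolant onto $\mathcal P_{\hat Q}=P_1\oplus\{\hat x^2\}$ of \eqref{def:FE}. The crucial point (and the reason $\hat x^2$ was chosen as the bubble, cf.\ Remark~\ref{rmk:bubble-choice}) is that $\mathcal P_{\hat Q}$ is affine in $\hat y$, so $\partial_{\hat y}(\hat\Pi\hat{\boldsymbol v})$ is constant; the triangle argument applied on the fixed reference domain $\hat Q$ then gives $\|\partial_{\hat y}(\hat\Pi\hat{\boldsymbol v})\|_{L^2(\hat Q)}\le\|\partial_{\hat y}\hat{\boldsymbol v}\|_{L^2(\hat Q)}$, while for the $\hat x$-direction the uniform basis bounds of Lemma~\ref{lem:basis_bound}, combined with a trace inequality and a Poincar\'e inequality on $\hat Q$ (valid by \eqref{bound_hatQ}), give the crude bound $|\hat\Pi\hat{\boldsymbol v}|_{H^1(\hat Q)}\lesssim|\hat{\boldsymbol v}|_{H^1(\hat Q)}$.

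Finally, I would transport these two bounds back to $\tilde Q$ direction by direction using the anisotropy-adapted scaling of Lemma~\ref{lem:affine-est} together with the analogous estimates for $\bm F^{-1}$ and the comparability $|\tilde Q|=\tfrac12(s+t-st)\sim t$ (recall $s\le t$): the factor $t^{-1/2}$ scaling the $\hat y$-derivative is exactly compensated by the gain $\|\partial_{\hat y}\hat{\boldsymbol v}\|_{L^2(\hat Q)}\lesssim t^{1/2}|\tilde{\boldsymbol v}|_{H^1(\tilde Q)}$ coming from the affine-in-$\hat y$ structure, and the $t^{1/2}$ scaling the $\hat x$-derivative cancels the $t^{-1/2}$ loss incurred in passing from $\tilde Q$ to $\hat Q$; hence $|\Pi_Q\boldsymbol v|_{H^1(Q)}\lesssim|\boldsymbol v|_{H^1(Q)}$ uniformly in $s,t$. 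The main obstacle is precisely this last estimate: naively combining a shape-independent but isotropic interpolation bound with the anisotropic change of variables loses one power of the cut ratio $t$ and blows up as $t\to0$, and the remedy is exactly to split into the two coordinate directions and exploit that the shape-function space is affine in the short direction $\hat y$ — the same mechanism already used for $T$, made possible here by the choice $\hat x^2$ rather than $\hat y^2$ or $\hat x\hat y$ as the bubble. A minor additional point is to verify carefully that $\Pi_h\boldsymbol v\in\boldsymbol U_h$ and that the interior cut-edge contributions cancel in the commuting identity, both of which rest on the weak-continuity constraint in \eqref{def:velocity-space}.
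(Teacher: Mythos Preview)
Your overall plan (Fortin argument with the edge-moment interpolant, commuting via the fact that $q^\perp_{0,h}$ is constant on each background element, directional analysis on $\hat Q$ exploiting the affine-in-$\hat y$ structure) is exactly the paper's strategy. The gap is in the step you flag as ``crude'': the bound $|\hat\Pi\hat{\boldsymbol v}|_{H^1(\hat Q)}\lesssim|\hat{\boldsymbol v}|_{H^1(\hat Q)}$ with a constant independent of $s,t$ does \emph{not} follow from a trace-plus-Poincar\'e argument on $\hat Q$ for a general $\hat{\boldsymbol v}\in H^1(\hat Q)$. The containment $\hat Q^-\subset\hat Q\subset\hat Q^+$ controls area and diameter, but it does not keep all edges of $\hat Q$ long: from the explicit vertex formulas one computes $|\hat e_2|=2\sqrt{(c_1-1)^2+(2-c_2)^2}\sim s/t$, which tends to $0$ when $s\ll t$, and similarly $|\hat e_3|$ can vanish as $t\to1$. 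In that regime the edge-mean functional $\mathcal N_2$ degenerates to a point evaluation, which is unbounded on $H^1$ in two dimensions; the estimate $|\mathcal N_i(\hat v-c)|\le|\hat e_i|^{-1/2}\|\hat v-c\|_{L^2(\hat e_i)}$ therefore carries an uncontrolled factor $|\hat e_i|^{-1/2}$, and your elementwise stability $|\Pi_Q\boldsymbol v|_{H^1(Q)}\lesssim|\boldsymbol v|_{H^1(Q)}$ is not established.

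The paper repairs precisely this point by not starting from the continuous inf-sup on $\Omega$ but from the $\boldsymbol P_2$--$P_0$ inf-sup on the shape-regular background mesh $\mathcal T_h$, so that $\boldsymbol v_2$ is piecewise $P_2$. Edge means are then bounded trivially by $\|\widehat{\boldsymbol v}_2+\boldsymbol c\|_{L^\infty(\hat Q)}$ (no edge length enters), the $L^\infty$ norm is pulled back to the fixed macro-reference triangle $\tilde M$, and finite-dimensional norm equivalence for $P_2$ on $\tilde M$ gives $\|\cdot\|_{L^\infty(\tilde M)}\lesssim\|\cdot\|_{H^1(\tilde M)}$. The outcome is only a macro-element bound $|\Pi^{(2)}_M\boldsymbol v_2|_{H^1(Q)}\lesssim|\boldsymbol v_2|_{H^1(M)}$, not the elementwise one you claim, but that is all the Fortin argument needs. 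To fix your proof, replace the continuous right-inverse of the divergence by the $\boldsymbol P_2$--$P_0$ one and route the $\hat x$-direction estimate through $L^\infty$ on $\tilde M$ as the paper does.
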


\begin{proof}
	By the inf-sup condition for the $\boldsymbol{P}_2-P_0$ element \cite[Proposition 8.4.3]{brezzi2012mixed}, for any $q^{\perp}_{0,h} \in X^{\perp}_{0,h}$, there exists $\boldsymbol{v}_2 $ belongs to the $\boldsymbol{P}_2$ Lagrange velocity space satisfying:
	\begin{equation}\label{eq:v2err}
		\nabla \cdot \boldsymbol{v}_2 = q^{\perp}_{0,h}, \quad
		|\boldsymbol{v}_2|_{H^1(\Omega)} \lesssim \|q^{\perp}_{0,h}\|_{L^2(\Omega)}.
	\end{equation}
	
	Define the local interpolation operator $\Pi^{(2)}_M: H^1(M) \rightarrow \boldsymbol{U}_h(M)$ analogously to $\Pi^{(1)}_M$ in Lemma \ref{le:sup1}. The stability estimates on the triangular sub-element $T$ follow similarly:
	\begin{equation}\label{eq:pi_m^2err}
		|\Pi^{(2)}_M \boldsymbol{v}_2|_{H^1(T)} \lesssim |\boldsymbol{v}_2|_{H^1(T)}.
	\end{equation}
	
	For the quadrilateral sub-element $Q$, 	using the scaling techqueic, we have
	\begin{align*}
		|\Pi^{(2)}\boldsymbol{v}_2|_{H^1(Q)}\lesssim |\tilde{\Pi}^{(2)}\boldsymbol{\tilde{v}}_2|_{H^1(\tilde{Q})}
	\end{align*}
	Similarly to Lemma \ref{le:sup1}, it holds
	\begin{equation*}
		\|\partial_{\tilde{y}} \tilde{\Pi}^{(2)}_{\tilde{M}} \boldsymbol{\tilde{v}}_2\|_{L^2(\tilde{Q})} \lesssim \|\partial_{\tilde{y}} \boldsymbol{\tilde{v}}_2\|_{L^2(\tilde{Q})}.
	\end{equation*}
	For the $\tilde{x}$-derivative, we derive
	\begin{align*}
		\|\partial_{\tilde{x}} \Pi^{(2)} \widetilde{\boldsymbol{v}}_2\|_{L^2(\widetilde{Q})}
		&= \inf_{\boldsymbol{c} \in \boldsymbol{P}_0} \|\partial_{\tilde{x}} \Pi^{(2)} (\widetilde{\boldsymbol{v}}_2 + \boldsymbol{c})\|_{L^2(\widetilde{Q})} \\
		&\lesssim |\widetilde{Q}|^{1/2} \inf_{\boldsymbol{c} \in \boldsymbol{P}_0} | \widehat{\Pi}^{(2)} (\widehat{\boldsymbol{v}}_2 + \boldsymbol{c})|_{H^1(\widehat{Q})} \quad \text{(by Lemma \ref{lem:affine-est})} \\
		&\lesssim |\widetilde{Q}|^{1/2} \inf_{\boldsymbol{c} \in \boldsymbol{P}_0} \sum_{i=1}^{6} \left| \frac{1}{|\hat{e}_i|} \int_{\hat{e}_i} (\widehat{\boldsymbol{v}}_2 + \boldsymbol{c}) ds \right| \cdot | \widehat{\boldsymbol{\phi}}_i|_{H^1(\widehat{Q})} \\
		&\lesssim |\widetilde{Q}|^{1/2} \inf_{\boldsymbol{c} \in \boldsymbol{P}_0} \| \widehat{\boldsymbol{v}}_2 + \boldsymbol{c} \|_{L^{\infty}(\widehat{Q})} \quad \text{(by Lemma \ref{lem:basis_bound})} \\
		&= |\widetilde{Q}|^{1/2} \inf_{\boldsymbol{c} \in \boldsymbol{P}_0} \| \widetilde{\boldsymbol{v}}_2 + \boldsymbol{c} \|_{L^{\infty}(\widetilde{Q})} \\
		&\lesssim \inf_{\boldsymbol{c} \in \boldsymbol{P}_0} \| \widetilde{\boldsymbol{v}}_2 + \boldsymbol{c} \|_{L^{\infty}(\widetilde{M})} \\
		&\lesssim \inf_{\boldsymbol{c} \in \boldsymbol{P}_0} \| \widetilde{\boldsymbol{v}}_2 + \boldsymbol{c} \|_{H^1(\widetilde{M})} 
		\quad \text{(by norm equivalence in $\bm{P}_2$)} \\
		&\lesssim | \widetilde{\boldsymbol{v}}_2 |_{H^1(\widetilde{M})} \quad \text{(by norm equivalence in quotient space)} \\
		&\lesssim | \boldsymbol{v}_2 |_{H^1(M)}.
	\end{align*}
	
	Therefore, we derive 
	\begin{equation}\label{eq:pi_M^2TQerr}
		|\Pi^{(2)}_M \boldsymbol{v}_2|_{H^1(Q)} \lesssim |\boldsymbol{v}_2|_{H^1(M)}.
	\end{equation}
	
	Jointly with \eqref{eq:pi_m^2err}, we obtain the macro-element stability:
	\begin{equation}\label{eq:pi_M^2TQ1err}
		|\Pi^{(2)}_M \boldsymbol{v}_2|_{H^1(T \cup Q)} \lesssim |\boldsymbol{v}_2|_{H^1(M)}.
	\end{equation}
	The proof completes by Fortin trick.
\end{proof}

Combining these two inf-sup lemmas for the subspaces, we now establish the inf-sup condition for the full space $\bm{U}_h\times X_h$.
\begin{theorem}\label{th:qhsup}
	There exists a constant $k>0,$ independent of $h,$ such that for any $q_{h}\in X_{h},$ it holds
	\begin{equation}\label{eq:infsup}
		k\| q_h \|_{L^2(\Omega)} \leq \sup_{\substack{\boldsymbol{v}_h \in \boldsymbol{U}_h \\ \boldsymbol{v}_h \neq \boldsymbol{0}}} \frac{b_h(\boldsymbol{v}_h,q_h)}{\|\boldsymbol{v}_h\|_{\boldsymbol{U}_h}}.
	\end{equation}
\end{theorem}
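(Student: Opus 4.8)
The plan is to combine the two partial inf-sup estimates of Lemmas~\ref{le:sup1} and \ref{le:sup2} through the decomposition $X_h=X_{0,h}\oplus X^{\perp}_{0,h}$, by the standard two-parameter test-function argument for mixed problems. Given $q_h\in X_h$, write $q_h=q_{0,h}+q^{\perp}_{0,h}$ with $q_{0,h}\in X_{0,h}$ and $q^{\perp}_{0,h}\in X^{\perp}_{0,h}$. I would first record that this decomposition is $L^2(\Omega)$-orthogonal: each generator $q_{h,i}$ of $X_{0,h}$ has zero mean over its interface macro-element $M$ and vanishes elsewhere, while every function in $X^{\perp}_{0,h}$ is constant on $M$, so $\int_\Omega q_{h,i}\,q^{\perp}_{0,h}\,dx=0$; hence $\|q_h\|_{L^2(\Omega)}^2=\|q_{0,h}\|_{L^2(\Omega)}^2+\|q^{\perp}_{0,h}\|_{L^2(\Omega)}^2$. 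Using finite dimensionality and Lemmas~\ref{le:sup1}--\ref{le:sup2}, I would then fix concrete test functions $\boldsymbol{v}_{1,h}\in\boldsymbol{U}_{0,h}$ and $\boldsymbol{v}_{2,h}\in\boldsymbol{U}_h$ with $\|\boldsymbol{v}_{1,h}\|_{\boldsymbol{U}_h}=\|q_{0,h}\|_{L^2(\Omega)}$, $\|\boldsymbol{v}_{2,h}\|_{\boldsymbol{U}_h}=\|q^{\perp}_{0,h}\|_{L^2(\Omega)}$, $b_h(\boldsymbol{v}_{1,h},q_{0,h})\ge k_1\|q_{0,h}\|_{L^2(\Omega)}^2$ and $b_h(\boldsymbol{v}_{2,h},q^{\perp}_{0,h})\ge k_2\|q^{\perp}_{0,h}\|_{L^2(\Omega)}^2$ (taking the corresponding test function to be $\boldsymbol{0}$ when a component vanishes).

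The step I expect to be the crux is the vanishing of one cross term, namely $b_h(\boldsymbol{v}_{1,h},q^{\perp}_{0,h})=0$ for all $\boldsymbol{v}_{1,h}\in\boldsymbol{U}_{0,h}$ and $q^{\perp}_{0,h}\in X^{\perp}_{0,h}$. The function $\boldsymbol{v}_{1,h}$ is supported on the interface macro-elements, and on such an $M=T\cup Q$ the pressure $q^{\perp}_{0,h}$ equals a single constant $c$, so the local contribution is $c\bigl(\int_T\nabla\!\cdot\!\boldsymbol{v}_{1,h}\,dx+\int_Q\nabla\!\cdot\!\boldsymbol{v}_{1,h}\,dx\bigr)=c\int_{\partial M}\boldsymbol{v}_{1,h}\cdot\boldsymbol{n}\,ds$, where the fluxes through the interior edge $\Gamma_h\cap M$ cancel because $\int_e[\boldsymbol{v}_{1,h}]\,ds=\boldsymbol{0}$ for every $e\in\widetilde{\mathcal{E}}_h$. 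Each edge $e'\subset\partial M$ is a non-interface edge of $\widetilde{\mathcal{T}}_h$, while every edge degree of freedom of a function in $\boldsymbol{U}_{0,h}$ is attached to an interface edge; hence $\int_{e'}\boldsymbol{v}_{1,h}\,ds=\boldsymbol{0}$, and since $\boldsymbol{n}$ is constant along the straight edge $e'$, $\int_{\partial M}\boldsymbol{v}_{1,h}\cdot\boldsymbol{n}\,ds=\sum_{e'\subset\partial M}\boldsymbol{n}_{e'}\cdot\int_{e'}\boldsymbol{v}_{1,h}\,ds=0$. Summing over the interface macro-elements gives the claim; this is exactly where enriching the velocity space with edge degrees of freedom on $\Gamma_h$, together with the weak-continuity constraint of $\boldsymbol{U}_h$, is used.

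With this identity in hand I would test against $\boldsymbol{v}_h=\boldsymbol{v}_{1,h}+\delta\,\boldsymbol{v}_{2,h}$ for a small fixed $\delta>0$ and expand $b_h(\boldsymbol{v}_h,q_h)$ into four terms: the term $b_h(\boldsymbol{v}_{1,h},q^{\perp}_{0,h})$ vanishes, $b_h(\boldsymbol{v}_{1,h},q_{0,h})$ and $\delta\,b_h(\boldsymbol{v}_{2,h},q^{\perp}_{0,h})$ are bounded below by $k_1\|q_{0,h}\|_{L^2(\Omega)}^2$ and $\delta k_2\|q^{\perp}_{0,h}\|_{L^2(\Omega)}^2$, and the remaining cross term is controlled via the continuity bound $|b_h(\boldsymbol{v}_{2,h},q_{0,h})|\lesssim\|\boldsymbol{v}_{2,h}\|_{\boldsymbol{U}_h}\|q_{0,h}\|_{L^2(\Omega)}=\|q^{\perp}_{0,h}\|_{L^2(\Omega)}\|q_{0,h}\|_{L^2(\Omega)}$ and absorbed by Young's inequality. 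Choosing $\delta$ small enough, depending only on $k_1$, $k_2$ and the continuity constant of $b_h$, yields $b_h(\boldsymbol{v}_h,q_h)\gtrsim\|q_{0,h}\|_{L^2(\Omega)}^2+\|q^{\perp}_{0,h}\|_{L^2(\Omega)}^2=\|q_h\|_{L^2(\Omega)}^2$, while $\|\boldsymbol{v}_h\|_{\boldsymbol{U}_h}\le\|\boldsymbol{v}_{1,h}\|_{\boldsymbol{U}_h}+\delta\|\boldsymbol{v}_{2,h}\|_{\boldsymbol{U}_h}\lesssim\|q_h\|_{L^2(\Omega)}$. Dividing and taking the supremum over $\boldsymbol{U}_h$ gives \eqref{eq:infsup} with a constant $k>0$ independent of $h$. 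Apart from the cross-term identity of the second paragraph, the remaining work is routine bookkeeping; the one point to stay alert to is that all constants entering Lemmas~\ref{le:div}--\ref{le:sup2} are uniform in the cut ratios $s,t$, which has already been arranged.
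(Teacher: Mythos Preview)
Your proposal is correct and follows essentially the same route as the paper: decompose $q_h=q_{0,h}+q^{\perp}_{0,h}$ orthogonally, test with $\boldsymbol{v}_h=\boldsymbol{v}_{1,h}+\delta\boldsymbol{v}_{2,h}$, use the key identity $b_h(\boldsymbol{v}_{1,h},q^{\perp}_{0,h})=0$ (which the paper derives in the same way, via the divergence theorem on each macro-element and the vanishing edge moments of $\boldsymbol{\phi}_6$ on $\partial M$), and absorb the remaining cross term by a Young/Cauchy--Schwarz argument. The only cosmetic differences are that the paper takes $\boldsymbol{v}_{1,h},\boldsymbol{v}_{2,h}$ directly from the Fortin constructions in Lemmas~\ref{le:sup1}--\ref{le:sup2} (so that $b_h(\boldsymbol{v}_{i,h},\cdot)$ equals rather than merely dominates the squared pressure norms) and completes the square explicitly with $\gamma=2/(C^2+1)$ instead of invoking Young's inequality.
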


\begin{proof}
	We shall prove an equivalent condition of \eqref{eq:infsup} (see \cite{Olshanskii2006}): for any $q_h \in X_h,$ there exists $\boldsymbol{v}_h \in \boldsymbol{U}_h$ such that
	\begin{align}\label{eq:vherr}
		\|q_h\|^2_{L^2(\Omega)} &\lesssim b_h(\boldsymbol{v}_h,q_h), \\
		\|\boldsymbol{v}_h\|_{\boldsymbol{U}_h} &\lesssim \|q_h\|_{L^2(\Omega)}.
	\end{align}
	
	For $q_h \in X_h,$ decompose it as
	\begin{equation*}
		q_h = q_{0,h} + q^{\perp}_{0,h},
	\end{equation*} 
	where $q_{0,h} \in X_{0,h},$ $q^{\perp}_{0,h} \in X^{\perp}_{0,h}.$ By orthogonality,
	\begin{equation*}
		\|q_h\|^2_{L^2(\Omega)} = \|q_{0,h}\|^2_{L^2(\Omega)} + \|q^{\perp}_{0,h}\|^2_{L^2(\Omega)}.
	\end{equation*} 
	
	Define $\boldsymbol{v}_h = \boldsymbol{v}_{1,h} + \gamma\boldsymbol{v}_{2,h},$ where $\boldsymbol{v}_{1,h}$ and $\boldsymbol{v}_{2,h}$ are chosen as in Lemmas \ref{le:sup1} and \ref{le:sup2} respectively. Then
	\begin{equation*}
		b_h(\boldsymbol{v}_h,q_h) = \ b_h(\boldsymbol{v}_{1,h},q_{0,h}) + b_h(\boldsymbol{v}_{1,h},q^{\perp}_{0,h}) + \gamma b_h(\boldsymbol{v}_{2,h},q_{0,h}) + \gamma b_h(\boldsymbol{v}_{2,h},q^{\perp}_{0,h}).
	\end{equation*}	
	Since $\boldsymbol{v}_{1,h} \in \boldsymbol{U}_{0,h},$ we have
	\begin{align*}
		b_h(\boldsymbol{v}_{1,h},q^{\perp}_{0,h}) 
		= \sum_{K\in \mathcal{T}_h^{\Gamma}} \int_K \mathrm{div}\boldsymbol{\phi}^{K}_6 \cdot q^{\perp}_{0,h} \, dx 
		= \sum_{K\in \mathcal{T}_h^{\Gamma}} (q^{\perp}_{0,h}|_{K}) \int_{\partial K} \boldsymbol{\phi}^K_6 \cdot \boldsymbol{n} \, ds = 0.
	\end{align*}
	Combining Lemmas \ref{le:sup1} and \ref{le:sup2}, and using Cauchy-Schwarz inequality, we obtain:
	\begin{align*}
		b_h(\boldsymbol{v}_h, q_h) 
		&= b_h(\boldsymbol{v}_{1,h}, q_{0,h}) + \gamma b_h(\boldsymbol{v}_{2,h}, q_{0,h}) + \gamma b_h(\boldsymbol{v}_{2,h}, q_{0,h}^\perp) \\
		&= \| q_{0,h} \|_{L^2(\Omega)}^2 + \gamma b_h(\boldsymbol{v}_{2,h}, q_{0,h}) + \gamma \| q_{0,h}^\perp \|_{L^2(\Omega)}^2 \\
		&\geq \| q_{0,h} \|_{L^2(\Omega)}^2 - \gamma \| \boldsymbol{v}_{2,h} \|_{U_h} \| q_{0,h} \|_{L^2(\Omega)} + \gamma \| q_{0,h}^\perp \|_{L^2(\Omega)}^2 \\
		&\geq \| q_{0,h} \|_{L^2(\Omega)}^2 - C \gamma \| q_{0,h}^\perp \|_{L^2(\Omega)} \| q_{0,h} \|_{L^2(\Omega)} + \gamma \| q_{0,h}^\perp \|_{L^2(\Omega)}^2 \\
		&\geq \frac{\gamma}{2} \left( \| q_{0,h}^\perp \|_{L^2(\Omega)} - C \| q_{0,h} \|_{L^2(\Omega)} \right)^2 + \frac{\gamma}{2} \| q_{0,h}^\perp \|_{L^2(\Omega)}^2 \\
		&\quad + \left(1 - \frac{C^2 \gamma}{2}\right) \| q_{0,h} \|_{L^2(\Omega)}^2 \\
		&\geq \frac{1}{C^2 + 1} \| q_h \|_{L^2(\Omega)}^2,
	\end{align*}
	where $\gamma = 2/(C^2+1).$ 
	
	Furthermore, for the norm bound of $\boldsymbol{v}_h$, we derive
	\begin{align*}
		\|\boldsymbol{v}_h\|_{\boldsymbol{U}_h}
		&\lesssim \|\boldsymbol{v}_{1,h}\|_{\boldsymbol{U}_h} + \gamma \|\boldsymbol{v}_{2,h}\|_{\boldsymbol{U}_h} \\
		&\lesssim \|q_{0,h}\|_{L^2(\Omega)} + \|q^{\perp}_{0,h}\|_{L^2(\Omega)} \\
		&\lesssim \|q_h\|_{L^2(\Omega)}.
	\end{align*}
	Thus the proof completes.
\end{proof}

\subsection{An a prior error estimate}\label{subsec:prior}
The following consistency error estimate is established in \cite{Wang2025nonconforming}:
\begin{lemma}\label{le:consistency}
	Assume the solution $(\bm{u}, p)$ of the Stokes interface problem \eqref{eq:stokespro} satisfies Assumption \ref{ass:regularity}. Then the consistency error satisfies
	\begin{equation}
		E_h(\bm{u}, p,\bm{v}_h) \lesssim h (\|\bm{u}\|_{H^2(\Omega_1 \cup \Omega_2)} + \|p\|_{H^1(\Omega_1 \cup \Omega_2)}) \|\bm{v}_h\|_{\bm{U}_h}.
	\end{equation}
\end{lemma}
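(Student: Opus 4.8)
\emph{Proof proposal.}
Recall that the consistency error is the residual obtained by substituting the exact pair $(\bm u,p)$ into the discrete scheme,
\[
E_h(\bm u,p;\bm v_h)\;=\;a_h(\bm u,\bm v_h)-b_h(\bm v_h,p)-(\bm f,\bm v_h),\qquad \bm v_h\in\bm U_h .
\]
The plan is to reduce $E_h$ to boundary integrals and then exploit the weak continuity built into $\bm U_h$. First I would integrate by parts element by element over $\widetilde{\mathcal T}_h$; combining the boundary contributions of $a_h$ and $-b_h$ into the single physical flux $(\mu_h\nabla\bm u-p\bm I)\bm n$ and using $-\nabla\!\cdot\!(\mu\nabla\bm u-p\bm I)=\bm f$ in $\Omega_1\cup\Omega_2$, the volume terms cancel against $(\bm f,\bm v_h)$ up to the residual on the thin curved region $\omega:=\bigcup_i(\Omega_i\triangle\Omega_{i,h})$, where $\mu_h\ne\mu$. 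Rewriting $\sum_K\int_{\partial K}$ as a sum over edges and using $\bm n_{K_2}=-\bm n_{K_1}$, on each interior edge where the flux $\bm g_e:=(\mu\nabla\bm u-p\bm I)\bm n_e$ is single-valued the integrand becomes $\bm g_e\cdot[\![\bm v_h]\!]$ (on boundary edges $[\![\bm v_h]\!]=\bm v_h$, whose trace matches the zero datum), so
\[
E_h(\bm u,p;\bm v_h)\;=\;\sum_{e\in\widetilde{\mathcal E}_h}\int_e \bm g_e\cdot[\![\bm v_h]\!]\,ds\;+\;R_h(\bm u,p;\bm v_h),
\]
where $R_h$ collects the PDE residual on $\omega$ together with the failure of the transmission condition $[\![(\mu\nabla\bm u-p\bm I)\bm n_\Gamma]\!]=\bm 0$ along $\Gamma_h$ (it holds only on $\Gamma$).

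For the principal edge sum I would invoke the defining constraint $\int_e[\![\bm v_h]\!]\,ds=\bm 0$ for every $e\in\widetilde{\mathcal E}_h$: subtracting the $L^2(e)$-mean $\overline{\bm g}_e$ of $\bm g_e$ does not change the integral, so Cauchy--Schwarz gives
\[
\Big|\int_e\bm g_e\cdot[\![\bm v_h]\!]\,ds\Big|=\Big|\int_e(\bm g_e-\overline{\bm g}_e)\cdot[\![\bm v_h]\!]\,ds\Big|\le\|\bm g_e-\overline{\bm g}_e\|_{L^2(e)}\,\|[\![\bm v_h]\!]\|_{L^2(e)} .
\]
The first factor is controlled by an edge Poincar\'e inequality followed by a trace inequality on the two elements sharing $e$, giving $\lesssim h\,(|\bm u|_{H^2(\omega_e)}+|p|_{H^1(\omega_e)})$ with $\omega_e$ the patch of $e$. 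For the second factor I would again use that $[\![\bm v_h]\!]$ has zero mean on $e$: on shape-regular elements this yields the classical $\lesssim h^{1/2}|\bm v_h|_{H^1(\omega_e)}$, while on the anisotropic interface sub-elements one passes to the reference configuration $\hat Q$ and uses the scaling bounds of Lemma~\ref{lem:affine-est} and Lemma~\ref{lem:basis-est}, whose directional factors $t^{\pm1/2}$ cancel against those from the edge scaling. Multiplying, summing over $e$, and applying discrete Cauchy--Schwarz (each element lies in boundedly many patches) produces $h\,(\|\bm u\|_{H^2(\Omega_1\cup\Omega_2)}+\|p\|_{H^1(\Omega_1\cup\Omega_2)})\,\|\bm v_h\|_{\bm U_h}$.

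The delicate point, and the main obstacle, is controlling $R_h$, and this is exactly where the $C^2$-regularity of $\Gamma$ in Assumption~\ref{ass:regularity} enters: since $\Gamma_h$ interpolates $\Gamma$ one has $\operatorname{dist}(\Gamma,\Gamma_h)\lesssim h^2$, so $\omega$ is a tubular neighbourhood of $\Gamma$ of width $O(h^2)$ meeting only interface elements. One then estimates the PDE residual on $\omega$ (of size $O(1)$ up to $\|\bm u\|_{H^2}$) against $\bm v_h|_\omega$ by Cauchy--Schwarz, transferring $\|\bm v_h\|_{L^2(\omega\cap K)}$ to an element norm via a trace/inverse estimate, and likewise for the transmission mismatch along $\Gamma_h$; the $O(h^2)$ width, the $h^{-1}$ from the inverse estimate, and the finite overlap of the interface elements leave an overall $O(h)$ bound, matching the interpolation order. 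Keeping the anisotropy of the interface elements under control throughout these trace and inverse estimates — so that no negative power of the cut ratios $s,t$ survives — is the technical heart of the argument, and it is precisely what the stability lemmas of Subsection~\ref{subsec:RNE} were designed to deliver.
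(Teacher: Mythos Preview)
The paper does not supply its own proof of this lemma; it simply states that the estimate is ``established in \cite{Wang2025nonconforming}'', the authors' earlier paper on a $P_1$-nonconforming method for elliptic interface problems on the same anisotropic fitted mesh. There is therefore no argument in the present paper to compare yours against line by line.

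Your outline is the standard second-Strang-lemma route and is the right plan. One small slip: on a shape-regular patch the oscillation factor satisfies $\|\bm g_e-\overline{\bm g}_e\|_{L^2(e)}\lesssim h^{1/2}\bigl(|\bm u|_{H^2(\omega_e)}+|p|_{H^1(\omega_e)}\bigr)$, not $h$; the missing $h^{1/2}$ is supplied by the jump factor and the product is the desired $O(h)$. Beyond that, your identification of the two genuine difficulties --- making the edge trace/Poincar\'e bounds uniform in the cut ratios $s,t$ on the anisotropic sub-elements, and controlling the interface-mismatch residual $R_h$ via the $O(h^2)$ width of the tubular region $\omega$ --- is exactly what the referenced companion paper is cited to handle. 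Note, though, that Lemmas~\ref{lem:affine-est} and~\ref{lem:basis-est} are specific to the rotated $Q_1$ element introduced here, whereas \cite{Wang2025nonconforming} works with a $P_1$-type space; the transfer to the present $\bm U_h$ is tacitly justified because both spaces share the same edgewise constraint $\int_e[\![\bm v_h]\!]\,ds=0$, which is the only structural property your argument actually uses.
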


Leveraging the inf-sup condition established in Theorem \ref{th:qhsup}, we obtain the following error estimate:
\begin{theorem}\label{th:err}
	Let $(\boldsymbol{u},p)$ and $(\boldsymbol{u}_h,p_h)$ be solutions to problems \eqref{eq:stweak} and \eqref{eq:stweak-discrete}, respectively. Then
	\begin{equation}\label{ferr}
		\|\boldsymbol{u}-\boldsymbol{u}_h\|_{\boldsymbol{U}_h} + \|p-p_h\|_{L^2(\Omega)} \lesssim h (\|\boldsymbol{u}\|_{H^2(\Omega_1\cup\Omega_2)} + \|p\|_{H^1(\Omega_1\cup\Omega_2)}).
	\end{equation}	
\end{theorem}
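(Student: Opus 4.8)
The plan is to follow the classical Strang-type argument for mixed nonconforming finite elements, combining the discrete inf-sup condition (Theorem \ref{th:qhsup}), the coercivity of $a_h(\cdot,\cdot)$ on $\bm{U}_h$, the consistency error bound (Lemma \ref{le:consistency}), and interpolation error estimates for the velocity and pressure spaces. First I would introduce the interpolant $\bm{u}_I = \Pi_h \bm{u} \in \bm{U}_h$ defined edgewise (the canonical nonconforming interpolant using the edge-average degrees of freedom) and the $L^2$-projection $p_I \in X_h$ of $p$. The key ingredients are the interpolation estimates $\|\bm{u} - \bm{u}_I\|_{\bm{U}_h} \lesssim h \|\bm{u}\|_{H^2(\Omega_1\cup\Omega_2)}$ and $\|p - p_I\|_{L^2(\Omega)} \lesssim h \|p\|_{H^1(\Omega_1\cup\Omega_2)}$; on the shape-regular background elements these are standard, and on the anisotropic interface sub-elements they follow from the scaling lemmas (Lemmas \ref{lem:basis_bound}, \ref{lem:affine-est}, \ref{lem:basis-est}) together with anisotropic interpolation theory in the spirit of Apel, since $\mathcal{P}_{\hat Q}$ contains $P_1$ and the quadratic term $\hat x^2$ was chosen precisely to align with the longer edge direction.

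Next I would split the error using the triangle inequality: $\|\bm{u}-\bm{u}_h\|_{\bm{U}_h} \le \|\bm{u}-\bm{u}_I\|_{\bm{U}_h} + \|\bm{u}_I - \bm{u}_h\|_{\bm{U}_h}$ and similarly for the pressure, so it suffices to bound the discrete errors $\bm{e}_h := \bm{u}_h - \bm{u}_I$ and $\epsilon_h := p_h - p_I$. Subtracting the discrete equations \eqref{eq:stweak-discrete} from the (broken) continuous equations tested against $\bm{v}_h \in \bm{U}_h$ gives the error equations
\begin{align*}
	a_h(\bm{e}_h, \bm{v}_h) - b_h(\bm{v}_h, \epsilon_h) &= a_h(\bm{u}-\bm{u}_I, \bm{v}_h) - b_h(\bm{v}_h, p - p_I) + E_h(\bm{u},p,\bm{v}_h), \\
	b_h(\bm{e}_h, q_h) &= b_h(\bm{u}-\bm{u}_I, q_h),
\end{align*}
where $E_h$ is the consistency functional controlled by Lemma \ref{le:consistency}. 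Taking $\bm{v}_h = \bm{e}_h$ in the first equation and using that $b_h(\bm{e}_h, q_h) = b_h(\bm{u}-\bm{u}_I, q_h)$ — which, since $q_h$ is piecewise constant, equals $\int_{\Omega_{i,h}}(\nabla\cdot(\bm{u}-\bm{u}_I))q_h$ and vanishes by the edge-average-preserving property of $\Pi_h$ on elements not meeting $\Gamma_h$, leaving only interface-element contributions of order $h$ — I would first absorb the pressure term and then invoke coercivity $a_h(\bm{e}_h,\bm{e}_h) \gtrsim \|\bm{e}_h\|_{\bm{U}_h}^2$. Combining with the three right-hand-side bounds (continuity of $a_h$ and $b_h$ applied to the interpolation errors, plus Lemma \ref{le:consistency}) yields $\|\bm{e}_h\|_{\bm{U}_h} \lesssim h(\|\bm{u}\|_{H^2(\Omega_1\cup\Omega_2)} + \|p\|_{H^1(\Omega_1\cup\Omega_2)})$.

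For the pressure, I would return to the first error equation, rearrange to $b_h(\bm{v}_h, \epsilon_h) = a_h(\bm{e}_h,\bm{v}_h) - a_h(\bm{u}-\bm{u}_I,\bm{v}_h) + b_h(\bm{v}_h, p-p_I) - E_h(\bm{u},p,\bm{v}_h)$, divide by $\|\bm{v}_h\|_{\bm{U}_h}$, take the supremum over $\bm{v}_h \in \bm{U}_h$, and apply the inf-sup condition of Theorem \ref{th:qhsup} to the left side to get $\|\epsilon_h\|_{L^2(\Omega)} \lesssim \|\bm{e}_h\|_{\bm{U}_h} + \|\bm{u}-\bm{u}_I\|_{\bm{U}_h} + \|p-p_I\|_{L^2(\Omega)} + h(\cdots)$, all of which are already $\mathcal{O}(h)$. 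A final triangle inequality delivers \eqref{ferr}. I expect the main obstacle to be the anisotropic interpolation estimate for the velocity on the degenerate quadrilateral sub-elements: one must verify that the edge-average interpolant on $\tilde Q$ satisfies the optimal $\mathcal{O}(h)$ $H^1$-error bound uniformly in the cut parameters $s,t$, which requires the Bramble–Hilbert / anisotropic scaling argument using $\mathcal{P}_{\hat Q} \supset P_1$ and the direction-adapted bounds of Lemma \ref{lem:affine-est} — essentially the same delicate point that motivated the choice of $\hat x^2$ as the bubble, and the reason the geometry was normalized so that $|\tilde m_1 \tilde m_3| \lesssim |\tilde m_2 \tilde m_4|$. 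A secondary technical point is checking that the consistency error analysis of \cite{Wang2025nonconforming} transfers verbatim to the enriched velocity space, since the added interface edge degrees of freedom preserve weak continuity and therefore do not introduce new consistency terms.
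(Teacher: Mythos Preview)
Your Strang-type argument is sound, but the paper takes a shorter route that sidesteps precisely the obstacle you flag. Rather than proving anisotropic interpolation estimates for the canonical edge-average interpolant on the rotated $Q_1$ quadrilaterals, the paper simply observes that the $P_1$ nonconforming space of \cite{Wang2025nonconforming} is a \emph{subspace} of $\bm{U}_h$; the approximation property $\inf_{\bm{w}_h\in\bm{U}_h}\|\bm{u}-\bm{w}_h\|_{\bm{U}_h}\lesssim h\|\bm{u}\|_{H^2(\Omega_1\cup\Omega_2)}$ is then inherited from that earlier work with no new interpolation analysis on degenerate quadrilaterals. Structurally, the paper also applies Brezzi's theorem in one shot to the perturbed discrete system (for arbitrary $(\bm{w}_h,z_h)$) rather than handling velocity via coercivity and pressure via inf-sup separately as you do. Your route would yield a self-contained estimate tied to the natural interpolant of the new element, but at the cost of the delicate uniform-in-$(s,t)$ interpolation bound you correctly identify as the main difficulty; the paper's route is shorter and reuses existing estimates, at the price of depending on \cite{Wang2025nonconforming} for approximation. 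As a side remark, your caution about $b_h(\bm{u}-\bm{u}_I,q_h)$ leaving interface-element residuals is unnecessary: since every degree of freedom is an edge average, $\int_K\nabla\!\cdot(\bm{u}-\Pi_h\bm{u})\,dx=\int_{\partial K}(\bm{u}-\Pi_h\bm{u})\cdot\bm{n}\,ds=0$ on \emph{every} sub-element $K\in\widetilde{\mathcal{T}}_h$, so that term vanishes exactly.
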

\begin{proof}
	Since nonconforming elements are employed, Brezzi's theorem cannot be applied directly. Applying Green's formula yields
	\begin{equation}\label{eq:err}
		\begin{cases}
			a_h(\boldsymbol{u}-\boldsymbol{u}_h, \boldsymbol{v}_h) - b_h(\boldsymbol{v}_h, p-p_h) = E(\bm{u},p,\boldsymbol{v}_h) & \forall \boldsymbol{v}_h \in \boldsymbol{U}_h, \\
			b_h(\boldsymbol{u}-\boldsymbol{u}_h, q_h) = 0 & \forall q_h \in X_h,
		\end{cases}
	\end{equation}
	where 
	\begin{align*}
		E(\bm{u},p,\boldsymbol{v}_h) = a_h(\boldsymbol{u}, \boldsymbol{v}_h) - b_h(\boldsymbol{v}_h, p) - (\boldsymbol{f},\boldsymbol{v}_h).
	\end{align*}
	
	For any $(\bm{w}_h,z_h)\in \bm{U}_h\times X_h$, it holds that
	\begin{align*}
		a_h(\boldsymbol{u}_h-\bm{w}_h, \boldsymbol{v}_h) - b_h(\boldsymbol{v}_h, p_h-z_h) &= a_h(\boldsymbol{u}-\bm{w}_h, \boldsymbol{v}_h) - b_h(\boldsymbol{v}_h, p-z_h) - E(\bm{u},p,\boldsymbol{v}_h) \\
		b_h(\boldsymbol{u}_h-\bm{w}_h, q_h) &= b_h(\boldsymbol{u}-\bm{w}_h, q_h)
	\end{align*}
	for all $\boldsymbol{v}_h \in \boldsymbol{U}_h$ and $q_h \in X_h$. By Brezzi's theorem, we have
	\begin{align*}
		\|\boldsymbol{u}_h-\bm{w}_h\|_{\bm{U}_h} + \|p_h-z_h\|_{L^2(\Omega)} \lesssim &\|\boldsymbol{u}-\bm{w}_h\|_{\bm{U}_h} + \|p-z_h\|_{L^2(\Omega)} \\
		                                                               &+ h (\|\bm{u}\|_{H^2(\Omega_1 \cup \Omega_2)} + \|p\|_{H^1(\Omega_1 \cup \Omega_2)}).
	\end{align*}
	
	The approximation capability follows since the nonconforming $P_1$ element space proposed by \cite{Wang2025nonconforming} is a subspace of $\bm{U}_h$. Applying the triangle inequality yields the desired result \eqref{ferr}.
\end{proof}

\section{Numerical experiments}\label{sec:example}
The interface is a circle centered at the origin with radius $r=\pi/7,$ i.e.,
\begin{equation*}
	\varPhi_{\Gamma}(x,y) = x^2+y^2-(\pi/7)^2.
\end{equation*}
Let 
\begin{equation*}
	\theta = \varPhi_{\Gamma}^2\, (x-1)^2(y-1)^2.
\end{equation*}
Using de Rham sequence, we construct a divergence free velocity
\begin{equation*}
	\boldsymbol{u} = \frac{1}{\mu}\mathbf{curl}\,\theta
	= \frac{1}{\mu}\left(\partial_y\theta,-\partial_x\theta\right).
\end{equation*}
Since $\partial_x \theta = \partial_y\theta$ on $\Gamma$ , $\boldsymbol{u}$ satisfies the interface condition $[\![\boldsymbol{u}]\!]=0$ on  $\Gamma$. The pressure is given by
\begin{equation*}
	p =x.
\end{equation*}

\begin{table}[H]
	\caption{Numerical results for \textbf{Example 1} with $\mu_1 = 10000,\,\mu_2=1$. } \label{tb:6.1.1}
	\begin{tabular}{c c c c c c c}
		\toprule
		\text{$\frac{1}{h}$}&
		\text{$\|p-p_h\|_{L^2(\Omega)}$}&
		\text{order}&
		\text{$\|\boldsymbol{u}-\boldsymbol{u}_h\|_{L^2(\Omega)}$}&
		\text{order}&
		\text{$|\boldsymbol{u}-\boldsymbol{u}_h|_{H^1(\Omega)}$}&
		\text{order}\\
		\midrule
		16 & 8.0045e-2 &  & 7.7868e-4 &  &  
		4.7226e-2 & \\
		32 & 4.0290e-2 & 0.9903 & 2.0514e-4 & 1.9243 & 
		2.4227e-2 & 0.9629\\
		64 & 2.0166e-2 & 0.9984 & 5.2293e-5 & 1.9719 & 
		1.2179e-2 & 0.9921\\
		128 & 1.0087e-2 & 0.9995 & 1.3095e-5 & 1.9976 & 
		6.1098e-3 & 0.9952\\
		256 & 5.0438e-3 & 0.9998 & 3.2878e-6 & 1.9938 &
		3.0586e-3 & 0.9982\\	
		\bottomrule
	\end{tabular}
\end{table}

\begin{figure}[H]
	\begin{minipage}{0.45\linewidth}        	
		\centering
		\includegraphics[scale=0.4]{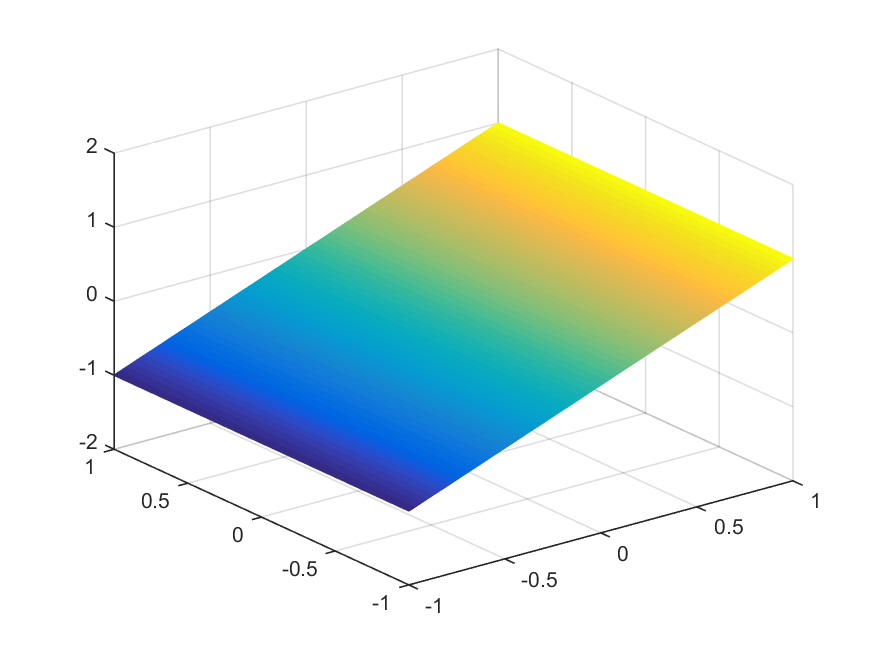}      	
	\end{minipage}
	\begin{minipage}{0.45\linewidth}
		\centering
		\includegraphics[scale=0.4]{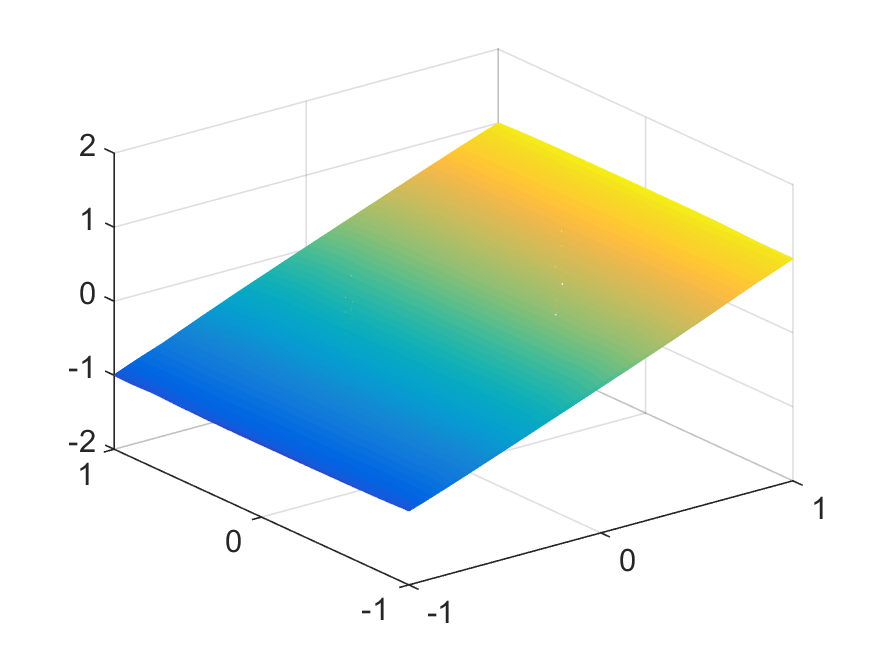}
	\end{minipage}
	\caption{The true solution $p$ (left) and the numerical solution $p_h$ (right) in \textbf{Example 1} with $\mu_1=10000,\,\mu_2=1$. 
	} \label{fig:ex2 p 10000,1}
\end{figure}

\begin{figure}[H]
	\begin{minipage}{0.45\linewidth}        	
		\centering
		\includegraphics[scale=0.4]{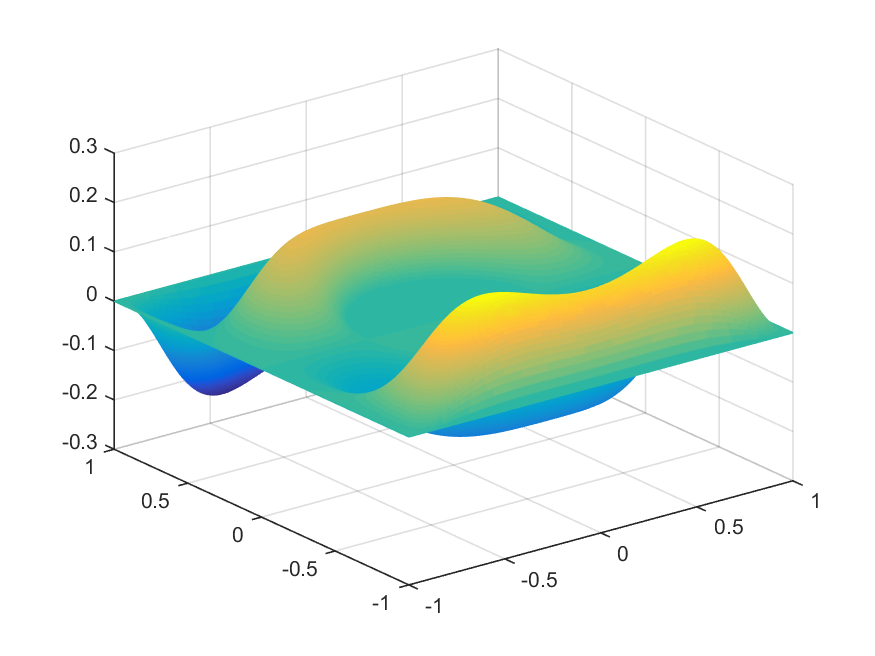}      	
	\end{minipage}
	\begin{minipage}{0.45\linewidth}
		\centering
		\includegraphics[scale=0.4]{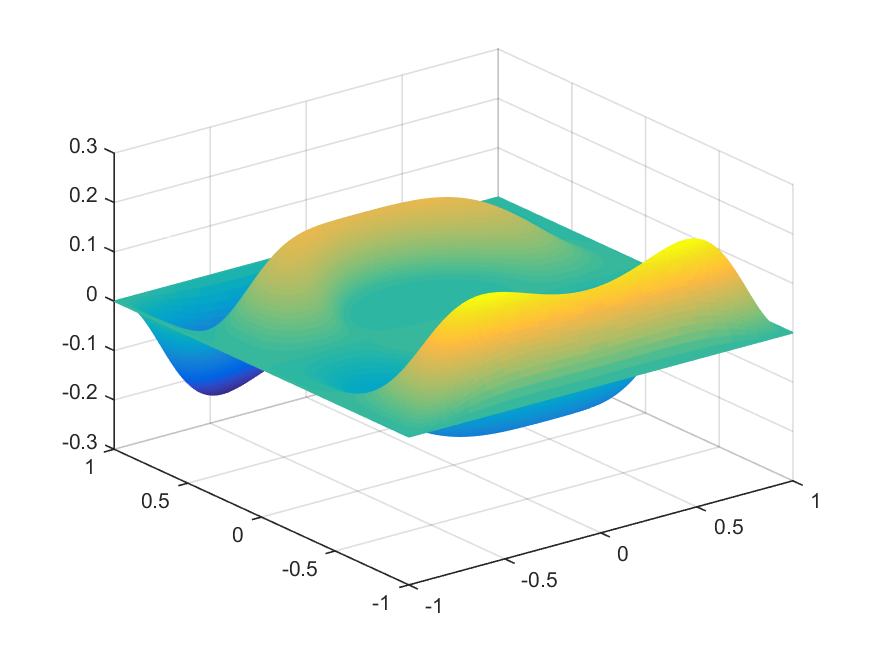}
	\end{minipage}
	\caption{The true solution (left) and the numerical solution (right) of the first component of the velocity field in \textbf{Example 1} with $\mu_1=10000,\,\mu_2=1$.} \label{fig:ex2 u1 10000,1}
\end{figure}

\begin{figure}[H]
	\begin{minipage}{0.45\linewidth}        	
		\centering
		\includegraphics[scale=0.4]{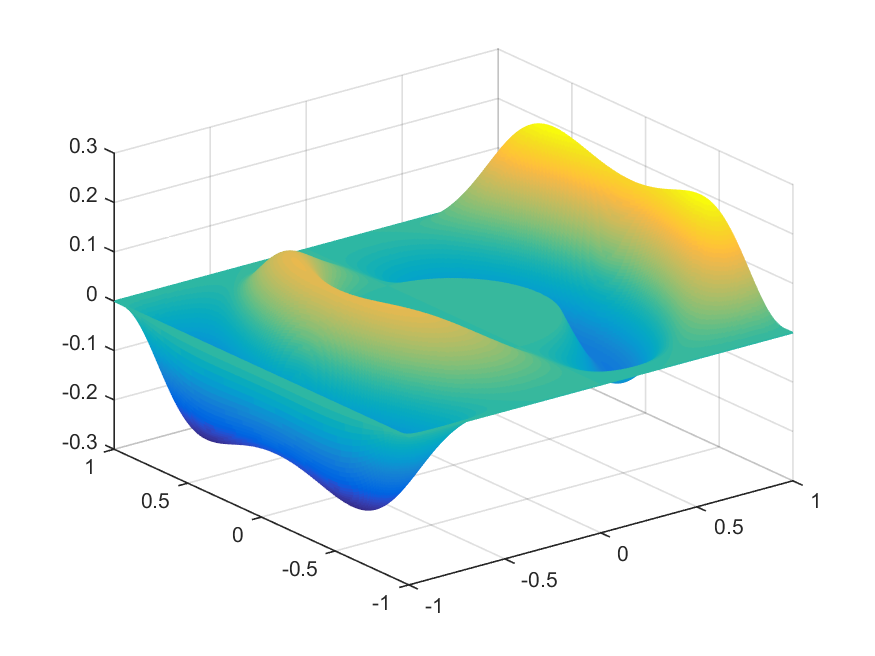}      	
	\end{minipage}
	\begin{minipage}{0.45\linewidth}
		\centering
		\includegraphics[scale=0.4]{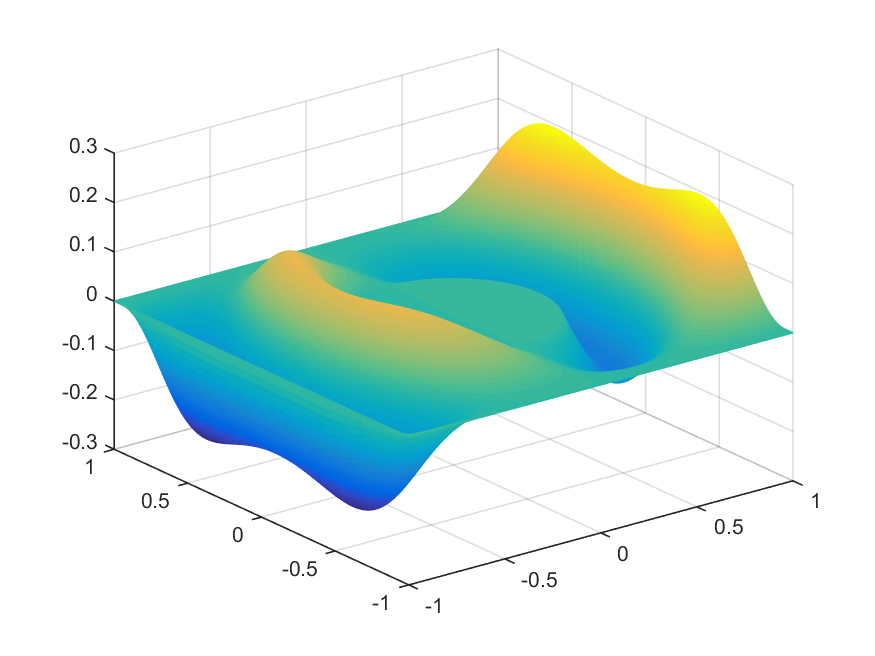}
	\end{minipage}
	\caption{The true solution (left) and the numerical solution (right) of the second component of the velocity field in \textbf{Example 1} with $\mu_1=10000,\,\mu_2=1$.} \label{fig:ex2 u2 10000,1}
\end{figure}

\begin{table}[H]
	\caption{Numerical results for \textbf{Example 1} with $\mu_1 = 1,\,\mu_2=10000$. } \label{tb:6.1.2}
	\begin{tabular}{c c c c c c c}
		\toprule
		\text{$\frac{1}{h}$}&
		\text{$\|p-p_h\|_{L^2(\Omega)}$}&
		\text{order}&
		\text{$\|\boldsymbol{u}-\boldsymbol{u}_h\|_{L^2(\Omega)}$}&
		\text{order}&
		\text{$|\boldsymbol{u}-\boldsymbol{u}_h|_{H^1(\Omega)}$}&
		\text{order}\\
		\midrule
		16 & 1.2287e-1 &  & 1.0368e-2 &  &  
		4.4406e-1 & \\
		32 & 5.1387e-2 & 1.2576 & 2.4984e-3 & 2.0531 & 
		2.2395e-1 & 0.9875\\
		64 & 2.0165e-2 & 1.3494 & 6.1676e-4 & 2.0182 & 
		1.1182e-1 & 1.0019\\
		128 & 1.1291e-2 & 0.8366 & 1.5628e-4 & 1.9805 & 
		5.6205e-2 & 0.9924\\
		256 & 5.3618e-3 & 1.0744 & 3.8967e-5 & 2.0038 &
		2.8069e-2 & 1.0017\\
		\bottomrule
	\end{tabular}
\end{table}

\begin{figure}[H]
	\begin{minipage}{0.45\linewidth}        	
		\centering
		\includegraphics[scale=0.4]{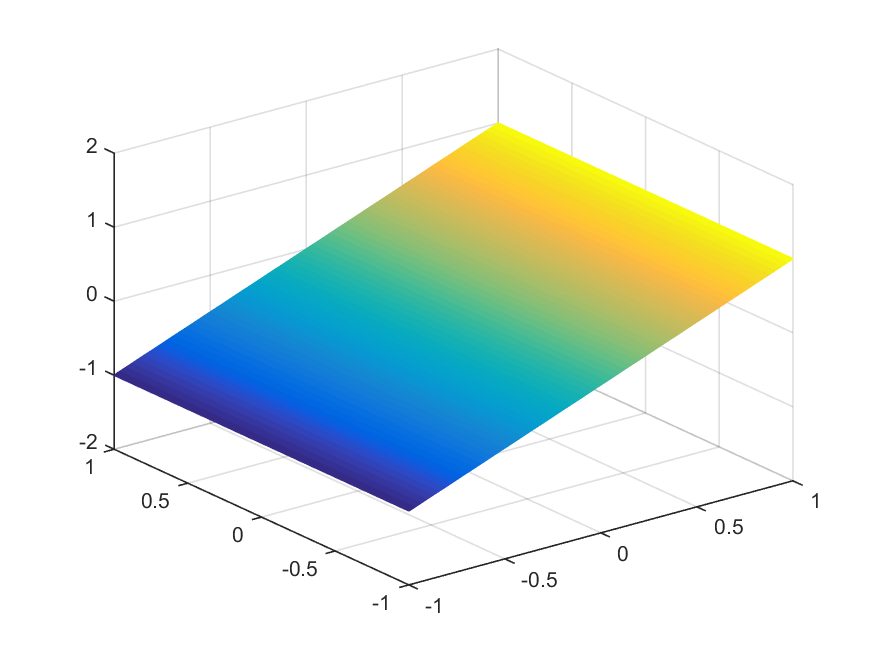}      	
	\end{minipage}
	\begin{minipage}{0.45\linewidth}
		\centering
		\includegraphics[scale=0.4]{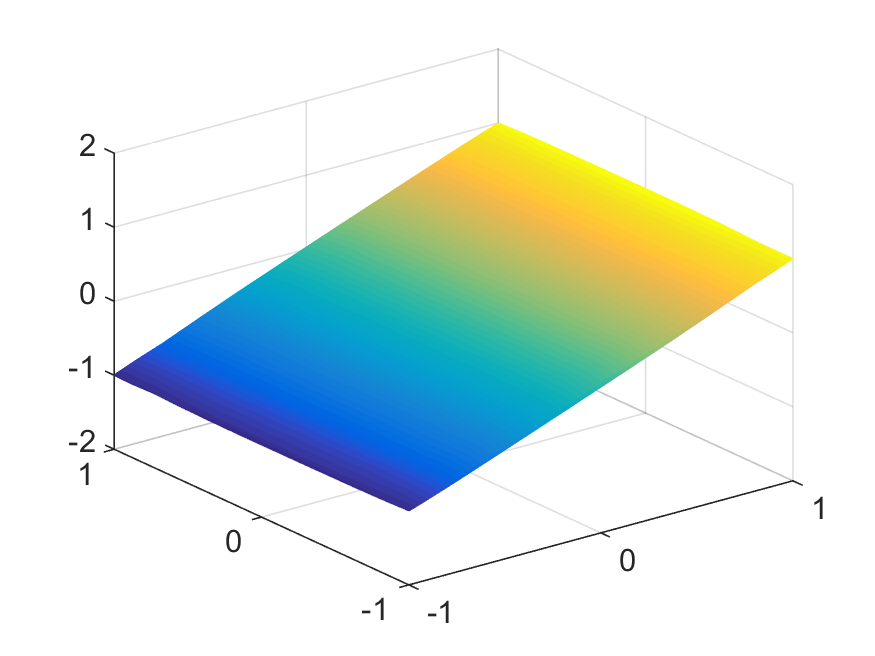}
	\end{minipage}
	\caption{The true solution $p$ (left) and the numerical solution $p_h$ (right) in \textbf{Example 1} with $\mu_1=1,\,\mu_2=10000$.} \label{fig:ex2 p 1,10000}
\end{figure}

\begin{figure}[H]
	\begin{minipage}{0.45\linewidth}        	
		\centering
		\includegraphics[scale=0.4]{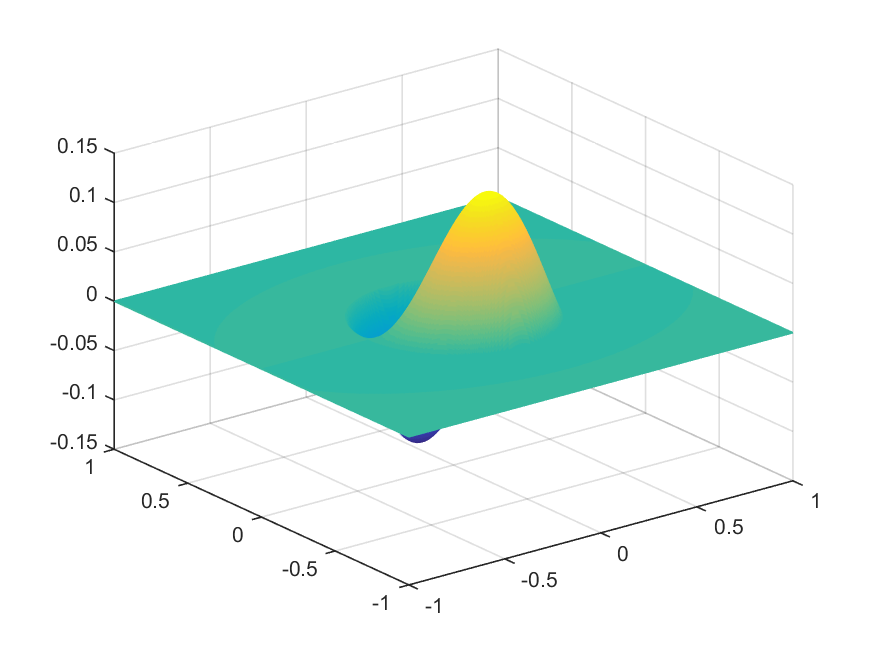}      	
	\end{minipage}
	\begin{minipage}{0.45\linewidth}
		\centering
		\includegraphics[scale=0.4]{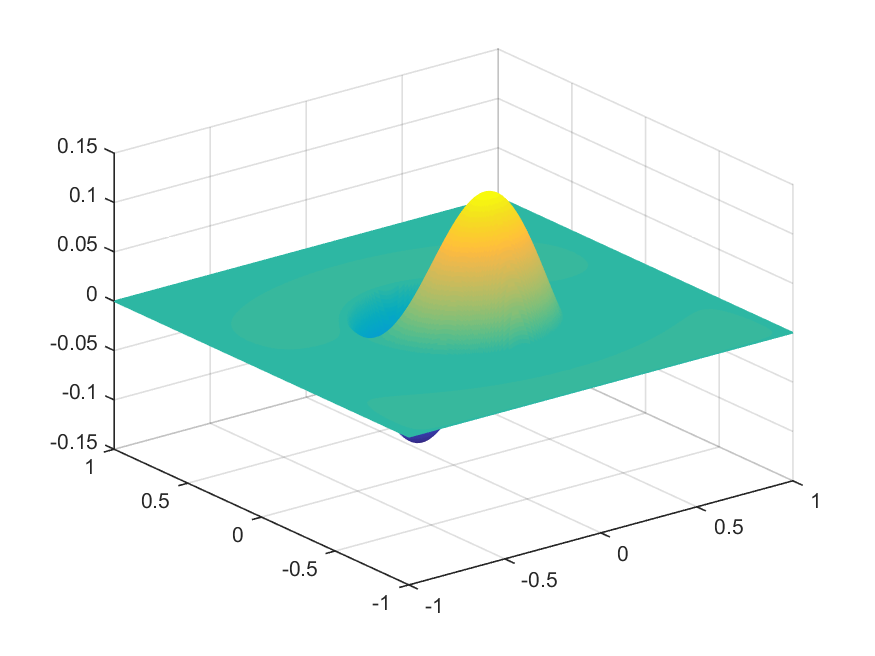}
	\end{minipage}
	\caption{The true solution (left) and the numerical solution (right) of the first component of the velocity field in \textbf{Example 1} with $\mu_1=1,\,\mu_2=10000$.} \label{fig:ex2 u1 1,10000}
\end{figure}

\begin{figure}[H]
	\begin{minipage}{0.45\linewidth}        	
		\centering
		\includegraphics[scale=0.4]{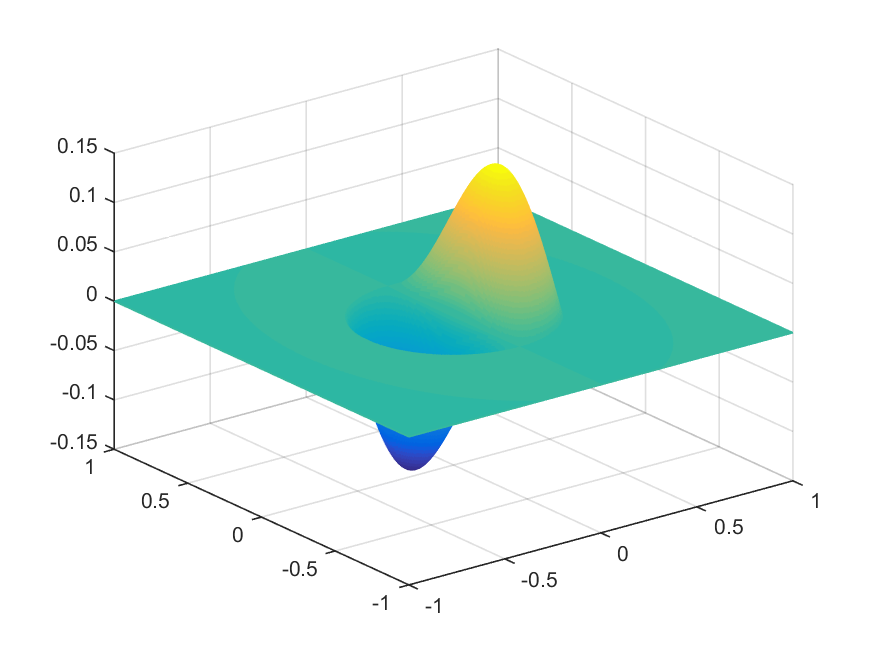}      	
	\end{minipage}
	\begin{minipage}{0.45\linewidth}
		\centering
		\includegraphics[scale=0.4]{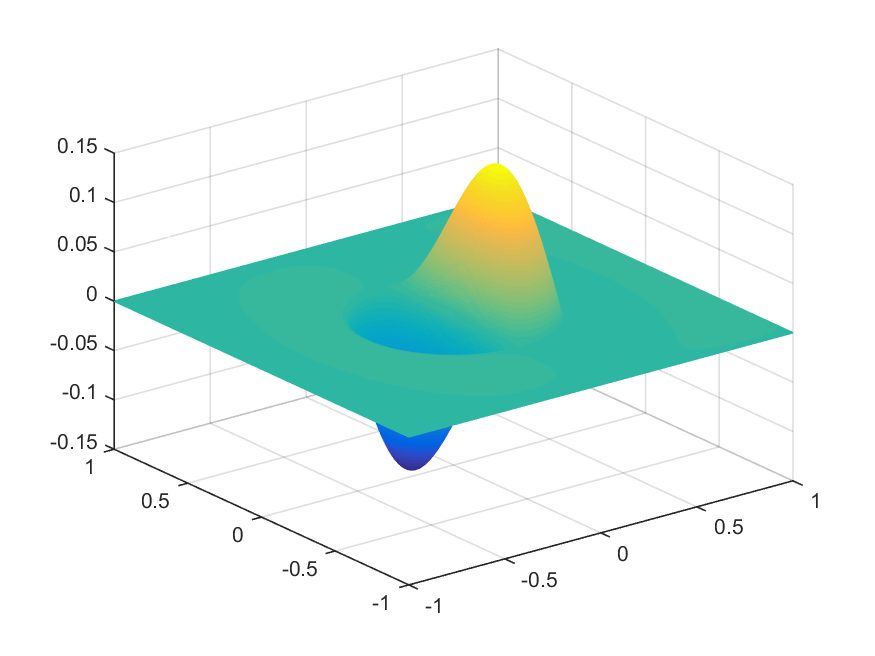}
	\end{minipage}
	\caption{The true solution (left) and the numerical solution (right) of the second component of the velocity field in \textbf{Example 1} with $\mu_1=1,\,\mu_2=10000.$} \label{fig:ex2 u2 1,10000}
\end{figure}

\begin{table}[H]
	\caption{Numerical results for \textbf{Example 1} with $\mu_1 = 1,\,\mu_2=100$. } \label{tb:6.1.3}
	\begin{tabular}{c c c c c c c}
		\toprule
		\text{$\frac{1}{h}$}&
		\text{$\|p-p_h\|_{L^2(\Omega)}$}&
		\text{order}&
		\text{$\|\boldsymbol{u}-\boldsymbol{u}_h\|_{L^2(\Omega)}$}&
		\text{order}&
		\text{$|\boldsymbol{u}-\boldsymbol{u}_h|_{H^1(\Omega)}$}&
		\text{order}\\
		\midrule
		16 & 8.3069e-2 &  & 9.6682e-3 &  &  
		4.3851e-1 & \\
		32 & 4.1260e-2 & 1.0095 & 2.4582e-3 & 1.9756 & 
		2.2292e-1 & 0.9761\\
		64 & 2.0165e-2 & 1.0328 & 6.1672e-4 & 1.9949 & 
		1.1182e-1 & 0.9953\\
		128 & 1.0134e-2 & 0.9926 & 1.5449e-4 & 1.9971 & 
		5.5997e-2 & 0.9977\\
		256 & 5.0524e-3 & 1.0041 & 3.8618e-5 & 2.0001 &
		2.7998e-2 & 0.9999\\
		\bottomrule
	\end{tabular}
\end{table}

\begin{table}[H]
	\caption{Numerical results for \textbf{Example 1} with $\mu_1 = 100,\,\mu_2=1.$ } \label{tb:6.1.4}
	\begin{tabular}{c c c c c c c}
		\toprule
		\text{$\frac{1}{h}$}&
		\text{$\|p-p_h\|_{L^2(\Omega)}$}&
		\text{order}&
		\text{$\|\boldsymbol{u}-\boldsymbol{u}_h\|_{L^2(\Omega)}$}&
		\text{order}&
		\text{$|\boldsymbol{u}-\boldsymbol{u}_h|_{H^1(\Omega)}$}&
		\text{order}\\
		\midrule
		16 & 8.0033e-2 &  & 7.8541e-4 &  &  
		4.7412e-2 & \\
		32 & 4.0285e-2 & 0.9903 & 2.0693e-4 & 1.9243 & 
		2.4325e-2 & 0.9627\\
		64 & 2.0166e-2 & 0.9983 & 5.2732e-5 & 1.9723 & 
		1.2230e-2 & 0.9919\\
		128 & 1.0086e-2 & 0.9995 & 1.3206e-5 & 1.9974 & 
		6.1353e-3 & 0.9952\\
		256 & 5.0437e-3 & 0.9998 & 3.3152e-6 & 1.9940 &
		3.0714e-3 & 0.9982\\
		\bottomrule
	\end{tabular}
\end{table}
\section{Conclusion}\label{sec:conclusion}
%因为网格中的四边形是可能退化的，对应的the rotated $Q_1$ type element的二次项的选择
In this work, we proposed a new nonconforming finite element method for solving the Stokes interface problems. The method was constructed on a local anisotropic hybrid mesh, which was first introduced in our earlier work \cite{Hu2021optimal}. The present results further demonstrate the effectiveness of this type of mesh in accurately resolving interface geometry while maintaining computational simplicity. The proposed nonconforming element reduces to the standard Crouzeix–Raviart element on triangular elements and to a new rotated $Q_1$ - type element on quadrilateral elements. This structure naturally accommodates the use of hybrid meshes and may be beneficial in other applications where elements of different shapes need to be effectively coupled. The consistency error is of optimal convergence order, as proved in our previous paper \cite{Wang2025nonconforming}. More importantly, we proved that this element satisfies the inf - sup condition without any stabilization terms, which is quite rare in the existing literature on finite element methods for Stokes interface problems. 

\section*{Declarations}
\textbf{Conflict of interest} The authors declare that they have no conflict of interest.
\bibliographystyle{plain}        % 支持eprint的样式
\bibliography{refer}
\end{document}